\newtheorem{thm}{Theorem}[section]
\newtheorem{lem}{Lemma}[section]
\newtheorem{definition}{Definition}[section]
\newtheorem{remark}{Remark}[section]
\newtheorem{problem}{Problem}
\newtheorem{example}{Example}
\begin{document}

\title{Direct imaging for the moment tensor point sources of elastic waves}

\author{
Xianchao Wang\thanks{1. School of Astronautics, Harbin Institute of Technology, Harbin, China. \ 
2. Department of Mathematics, City University of Hong Kong, Kowloon, Hong Kong SAR, China. {\it xcwang90@gmail.com}},
Yukun Guo\thanks{School of Mathematics, Harbin Institute of Technology, Harbin, China. {\it ykguo@hit.edu.cn}(Corresponding author)}\ \ and 
Sara Bousba\thanks{Department of Mathematics, Ferhat Abbas University, Setif, Algeria. {\it bousbasara90@gmail.com}}
}
\date{}

%==============================================================

\maketitle

\begin{abstract}
We investigate an inverse source problem of the time-harmonic elastic wave equation. Some novel sampling-type numerical schemes are proposed to identify the moment tensor point sources in the Lam\'e system  from near-field measurements. Rigorous theoretical justifications are provided to show that the locations and moment tensors of the elastic sources can be uniquely determined from the multi-frequency displacement data. Several numerical examples are also presented to illustrate the validity and robustness of the proposed method.
\\

\noindent{\bf Keywords:}~~Inverse source problem, elastic wave, moment tensor point source, direct sampling method.
\end{abstract}

%========================================================================

\section{Introduction}

The inverse source problems (ISPs) are associated with the identifications of unknown sources from the measured radiating field. The ISPs have attracted considerable attention in scientific fields and engineering applications, such as antenna synthesis \cite{Angel1991,Devaney2007}, medical imaging \cite{Arridge1999,  Liu2015} and seismic monitoring \cite{Albanese2014, Song2011}.

Due to the existence of non-radiating sources, the time-harmonic inverse source problems for wave equations with a single frequency may possess non-unique solutions \cite{Bleistein1977, Albanese2006}. Roughly speaking, two categories of approaches have been proposed in the literature to tackle the difficulty of non-uniqueness.
The first approaches utilize a single fixed frequency but assume that some \textit{a priori} information on the sources is available. We refer to Kusiak and Sylvester \cite{Kusiak2003, Kusiak2005} and Griesmaier \textit{et al} \cite{Griesmaier2012, Griesmaier2013} for the relevant studies on these strategies. The other approaches resort to the multi-frequency measurement data, that is,  the underlying frequency varies within certain open interval. For some recent investigations on the multi-frequency inverse source problems, we refer the reader to the recursive algorithm \cite{Bao2015, Bao2015IP, Bao2017}, the eigenfunction expansion method \cite{Eller2009, Valdivia2012}, the Fourier method \cite{Zhang2015, Wang2017, Wang2018} and the sampling method \cite{Alzaalig2019,  Griesmaier2017, Ji2019}.

In this article, we consider an inverse source problem for the time-harmonic elastic waves, where the elastic source term is modeled by the moment tensor point source. Our goal is to determine the moment tensor point source from the measured multi-frequency displacement data. The moment tensor is widely used in various practical scenarios and it covers a number of specific elastic sources such as earthquakes, explosions and mine collapses. Physically, the moment tensor represents the equivalent body force of an elastic source \cite{Aki2002}. It is noteworthy that the accurate estimation of the moment tensor point source plays a significantly crucial role in modern seismology. For example, the identification of moment tensor point sources is an essential ingredient in characterizing seismic events and the fault orientation\cite{Aki2002}.

Next, we present the mathematical formulation of the inverse source problem for the elastic waves. Let $\Omega\subset \mathbb{R}^d\,(d=2,3)$ be a bounded domain with $\mathcal{C}^2$  boundary $\Gamma=\partial \Omega$. Let $\omega$ be the angular frequency and $(\lambda, \mu)$ denote the Lam\'e constants such that $\mu>0$ and $d\lambda+2\mu>0$. Let the elastic wave be described by the displacement field $u\in \mathbb{C}^d(\mathbb{R}^d)$, then the propagation of elastic wave from the source term $ S $ is governed by the Lam\'e system:
\begin{equation}\label{eq:LameSystem}
   -\omega^2 u=\nabla\cdot\sigma(u)+S \quad \mathrm{in} \ \mathbb{R}^d,
 \end{equation}
where the stress tensor $\sigma$ is given by
\begin{equation*}
  \sigma(u)=\lambda \nabla\cdot u I+\mu \left(\,{\nabla u}^{\top}+\nabla u \,\right),
\end{equation*}
and $I$ is the $d\times d$ identity matrix. Here, the source term  $S$ consists of a finite number of well-separated moment tenser point sources, such that
\begin{equation}\label{eq:Source}
  S(x)= \sum_{j=1}^{m} M^{(j)} \nabla\delta(x-s_j),\quad m\in \mathbb{N}_+,\ s_j\in \Omega,
\end{equation}
where $M^{(j)} \in \mathbb{R}^{d \times d}$ signifies the two-rank moment tensor, and $\delta(x-s_j)$ denotes the Dirac delta distribution at point $s_j$,  $j=1,2\cdots,m$.

The inverse source problem that we are concerned with is to determine the locations $\{s_j\}_{j=1}^{m}$ and moment tensors $\{M^{(j)}\}_{j=1}^m$ from the displacement fields measured on the boundary $\Gamma$. It is remarked that the measurements $u|_{\Gamma}$ are linearly dependent on the source term $S$ but nonlinearly dependent on the locations and moment tensors of the point sources, and hence the inverse  problem is nonlinear and ill-posed.  Several inversion methods have been developed to reconstruct the moment tensor point source. Sj\"{o}green and Petersson \cite{Sjogreen2014} proposed a full waveform inversion method for estimating seismic source parameters by using time-dependent ground motion recordings at a number of receiver stations. In addition, Long, Motamed and Tempone \cite{Long2015} developed a fast method for seismic source inversion based on Bayesian statistics.  However,  only a single moment tensor point source is considered in both of the aforementioned works. For the inverse source problem of acoustic wave, a direct sampling method is recently proposed in \cite{Zhang2019} to recover multiple point sources. Motivated by the idea of \cite{Zhang2019}, in this work we propose a novel direct sampling scheme for reconstructing the elastic source $S$,  that is, identifying all the locations and moment tensors for the point sources.  To our best knowledge, this is the first sampling scheme to reconstruct multiple moment tensor point sources from  time-harmonic elastic measurements.  Interested readers could refer to \cite{Badia2015, Sara2018,  Badia2013, Ling2005} for some theoretical and computational results on recovering multiple point sources for the Poisson equation, the Helmholtz equation and the Maxwell system.

The proposed reconstruction schemes are based on some indicator functions which are associated with the asymptotic behaviors of the Bessel functions. The salient features of our approach are summarized as follows. First, the imaging schemes are very easy to implement with computational efficiency. Since only cheap integrations are involved in the formulation of the indicator functions, the schemes need neither iterative process nor forward solver. In fact, the indicator functions are formulated directly from the measured boundary data. Second, the sampling schemes utilize the data due to a discrete set of specially designated frequencies. These admissible frequencies are not uniformly distributed and have the capability of facilitating high accuracy of reconstruction. Third, the reconstruction procedure comprises two simple phases. In the first phase, the source locations are identified via the significant local maximizers of the indicator function. Then, in the second phase, the moment tensors can be recovered by the intensities of the indicator function evaluated at these source locations. Finally, the proposed method is robust with respect to the noise-contaminated measurement data.

The rest of the paper is organized as follows. Section \ref{sec:problem_setup} introduces the direct and inverse problems under consideration. In section \ref{sec:sampling method}, we present the direct sampling method with multi-frequency near-field measurements. Then we establish the uniqueness and stability in section \ref{sec:stability}. Section \ref{sec:example} is devoted to the numerical experiments, which verify the validity and robustness of the proposed reconstruction schemes.

\section{Problem setting}\label{sec:problem_setup}

In an isotropic homogeneous medium, the Lam$\acute{\mathrm{e}}$ system defined in \eqref{eq:LameSystem} reduces to the time-harmonic Navier equation (cf.\cite{Kupradze1979})
\begin{equation}\label{eq:NavierEquation}
   \omega^2 u+ \mathcal{L}_{\lambda,\mu} u  =-S, \quad \mathrm{in} \ \mathbb{R}^d,
\end{equation}
where $\mathcal{L}_{\lambda,\mu}$ denotes the linear isotropic elasticity operator
\begin{equation*}
  \mathcal{L}_{\lambda,\mu} u:=\mu \Delta u+(\lambda+\mu)\nabla\nabla\cdot u.
\end{equation*}
It is well known that any solution $u$ of \eqref{eq:NavierEquation} has the Helmholtz decomposition
\begin{equation*}
  u= u_p+ u_s,
\end{equation*}
where $u_p\in \mathbb{C}^d(\mathbb{R}^d)$ and $u_s\in \mathbb{C}^d(\mathbb{R}^d)$ denote the compressional (longitudinal) part and the shear (transversal) part of the displacement $u$, respectively. As usual, let $\mathrm{grad}^{\bot}:=(-\partial_2, \partial_1)^{\top}$ and  $\mathrm{div}^{\bot}:=(-\partial_2, \partial_1)$, then the compressional and shear part of the displacement field can be represented by
\begin{equation*}
u_p=-\frac{1}{k_p^2}\mathrm{grad\,div}\,  u, \quad
u_s=
\begin{cases}
-\dfrac{1}{k_s^2} \mathrm{grad}^{\bot}\,\mathrm{div}^{\bot}\,u, \quad \mathrm{in}\ \mathbb{R}^2, \\
\dfrac{1}{k_s^2} \mathrm{curl}\,\mathrm{curl}\, u, \quad
\quad\quad \mathrm{in}\ \mathbb{R}^3,
\end{cases}
\end{equation*}
where $k_p={\omega}/{\sqrt{\lambda+2\mu}}$ and $k_s={\omega}/{\sqrt{\mu}}$ are the compressional and shear wave numbers, respectively.

Given the source $S$ defined in \eqref{eq:Source}, the forward problem is to find the radiated field $u\in \left(H_{\rm loc}^1(\mathbb{R}^d)\right)^d$ such that
\begin{align}
 \label{eq:main} & \omega^2 u+ \mathcal{L}_{\lambda,\mu} u = -S, \quad \mathrm{in} \ \mathbb{R}^d, \\
 \label{eq:KSP} &  \lim_{r=|x|\rightarrow \infty} r^{\frac{d-1}{2}} \left(\dfrac{\partial u_p}{\partial r} -\mathrm{i}k_p u_p\right)= 0, \\
 \label{eq:KSS} & \lim_{r=|x| \rightarrow \infty} r^{\frac{d-1}{2}} \left(\dfrac{\partial u_s}{\partial r} -\mathrm{i}k_s  u_s\right)= 0,
\end{align}
where \eqref{eq:KSP} and \eqref{eq:KSS} are the Kupradze-Sommerfeld radiation conditions (cf.\cite{Kupradze1979}). The forward problem to \eqref{eq:main}-\eqref{eq:KSS} admits a unique solution, which can be represented by  (cf. \cite{Ammari2015})
\begin{equation*}
  u(x)=\int_{\Omega} G(x,y) S(y)\, \mathrm{d}y,
\end{equation*}
where $G$ is the Green tensor to the Navier equation
\begin{equation*}
  G(x,y)=\frac{k_s^2}{\omega^2} \Phi_{k_s}(x, y) I+
  \frac{1}{\omega^2}\nabla_x\nabla_x^{\top}
  \left(\Phi_{k_s}(x,y)-\Phi_{k_p}(x,y)\right).
\end{equation*}
Here $\Phi$ denotes the fundamental solution to the Helmholtz equation \cite{Colton2019}, that is
\begin{equation*}
  \Phi_{k}(x, y)=
  \begin{cases}
   \dfrac{\mathrm{i}}{4} H_0^{(1)}(k|x-y|), & d=2, \medskip \\
   \dfrac{\mathrm{e}^{\mathrm{i}k|x-y|}}{4\pi |x-y|}, & d=3,
  \end{cases}
\end{equation*}
where $H_0^{(1)}$ is the Hankel function of the first kind and order zero.

Let $\nu$ denote the exterior unit normal to the measurement surface $\Gamma$. The surface traction operator is then defined by
\begin{equation*}
  T_{\nu}:=
  \begin{cases}
  2\mu\nu\cdot \mathrm{grad} +\lambda\nu\mathrm{div}-\mu\nu^{\bot}\mathrm{div}^{\bot}, &  d=2,\medskip \\
  2\mu\nu\cdot \mathrm{grad} +\lambda\nu\mathrm{div}-\mu\nu\times \mathrm{curl}, & d=3.
  \end{cases}
\end{equation*}
To formulate the model inverse problem, we introduce the near-field dataset with a fixed frequency
\begin{equation}\label{eq:near-data}
  \left\{\left(u(\cdot, c_{\alpha}\omega), \,T_{\nu}u(\cdot, c_{\alpha}\omega)\right)\big|_{\Gamma}\right\},
\end{equation}
where $\alpha=p,s$ and
\begin{equation*}
c_{\alpha}:=
  \begin{cases}
      \sqrt{\lambda+2\mu}, & \alpha=p, \\
      \sqrt{\mu},                    & \alpha=s.
  \end{cases}
\end{equation*}

Now the inverse source problem under consideration can be stated as follows:
\begin{problem}
Given a finite number of admissible frequencies $\{\omega\}$,  determine the locations $\{s_j\}_{j=1}^m$ and the moment tensors  $\{M^{(j)}\}_{j=1}^m $ of the source $S$  from the measured multi-frequency  near-field data $\{\left(u(\cdot, c_{\alpha}\omega), \, T_{\nu}u(\cdot, c_{\alpha}\omega)\right)\big|_{\Gamma}\}$, where $\alpha=p,s$.
\end{problem}

We remark that the appropriate choice of the admissible frequencies would play a significantly important role in our inversion method. Details of this issue will be discussed in the next section.

%==============================================================

\section{Direct sampling method}\label{sec:sampling method}

In this section, we propose a novel direct sampling method for determining the locations and moment tensors of the point sources from multi-frequency measurements. To this end, we first define the admissible set of frequencies. Under the mild a priori information that the lower bound of distances between distinct point sources is available, the admissible frequencies are defined as the following.
\begin{definition}[Admissible frequencies]\label{def:frequencies}
	Let $\omega^*$ be a fixed frequency such that
	\begin{equation}\label{eq:omega0}
		\omega^* \geq \frac{2}{ \displaystyle \min_{\substack{1\leq j, \, j' \leq m\\
					j\neq j'}} \mathrm{dist}(s_j, s_{j'})}.
	\end{equation}
Given $N\in \mathbb{N}_+$ and a fixed $\eta>1$, the admissible set of angular frequencies is given by
\begin{equation*}
	\mathbb{W}_N:= \left\{ \omega_n: \omega_n=\eta^{n-1}\omega^*, \  n=1,2,\cdots, N  \right\}.
\end{equation*}

\end{definition}

\begin{remark}
In terms of Definition \ref{def:frequencies}, it can be seen that these frequencies form a finite geometric series.  In particular, the selection of the lowest frequency $\omega^*$ as in \eqref{eq:omega0} is for the resolution purpose of separating different point sources so that the two closest sources are distinguishable. We would like to point out that these tailored and non-uniformly distributed frequencies have the advantage of fitting the justification of Theorem \ref{thm:main}.
\end{remark}

Based on the above admissible frequencies, we are going to introduce the multi-frequency indicator function. Denote by $\mathbb{S}^{d-1}$ the unit sphere in $\mathbb{R}^d$. For $\hat x \in \mathbb{S}^{d-1}, \omega\in \mathbb{R}_+$, let
\begin{equation*}
	\mathcal{R}(\hat x, \omega,\alpha):=\int_{\Gamma}
	\left\{[T_{\nu}V_{\alpha}(\hat x, y, c_{\alpha}\omega)]^{\top} u(y,c_{\alpha}\omega)
	-V_{\alpha}^{\top}(\hat x, y, c_{\alpha}\omega) T_{\nu} u(y,c_{\alpha}\omega)\right\}
	\mathrm{d}s(y).
\end{equation*}
where $\alpha=p, s$ and
\begin{equation*}
	V_{\alpha}(\hat x,y,\omega):=
	\begin{cases}
		\ \hat x\hat x^{\top}\mathrm{e}^{-\mathrm{i}\omega \hat x \cdot y/c_p }, & \alpha=p,\medskip\\
		\left(I-\hat x\hat x^{\top}\right)\mathrm{e}^{-\mathrm{i}\omega \hat x \cdot y/c_s }, & \alpha=s.
	\end{cases}
\end{equation*}
To characterize the unknown moment tensor sources at a general sampling point $z$, the following indicator function is proposed.
\begin{equation}\label{eq:NM}
    \mathcal{I}^{N, d}(z)=\frac{d}{ 2^{d-1}\pi \mathrm{i} N}\sum_{\omega\in \mathbb{W}_ N}\left(\frac{1}{\omega} \int_{\mathbb{S}^{d-1}}
     \left(\sum_{\alpha\in\{p,s\}} \mathcal{R}(\hat x, \omega,\alpha) \right)
   \otimes \hat x \mathrm{e}^{\mathrm{i}\omega\hat x\cdot z}   \,\mathrm{d}s(\hat x)\right),
\end{equation}
where $\otimes$ denotes the outer product and $\sum\limits_{\alpha\in\{p,s\}}$ signifies the sum over indices $p$ and $s$, i.e.,
\begin{equation*}
  \sum_{\alpha\in\{p,s\}} f(\cdot, \alpha):=f(\cdot, p)+f(\cdot, s).
\end{equation*}

We next analyze the multi-frequency indicator function defined in \eqref{eq:NM} and thus the following crucial lemma is needed.
\begin{lem}\label{lemma1}
Let $z\in \mathbb{R}^d\backslash\{0\}$, $\omega\in \mathbb{R}_{+}$ and $\hat z= z/|z|=(\hat z_1,\cdots, \hat z_d)$. Then it holds that
\begin{align*}
\int_{\mathbb{S}^{d-1}}  \hat x \otimes \hat x\, \mathrm{e}^{\mathrm{i}\omega\hat x\cdot z}\, \mathrm{d}s(\hat x)
=\begin{cases}
\pi\Big( J_0(\omega |z|)I- J_2(\omega|z|) A(\hat z) \Big), & d=2, \medskip \\
\dfrac{4\pi}{3}\Big(  j_0(\omega|z|)I+j_2(\omega|z|)\big(I-3\hat z \otimes \hat z\big)\Big), & d=3,
\end{cases}
\end{align*}
where $J_n$ and $j_n$ denote respectively the Bessel function and spherical Bessel function of order $n$, and
\begin{equation*}
    A(\hat z)=
    \begin{bmatrix}
        \hat z_2^2 -\hat z_1^2 & 2\hat z_1\hat z_2 \\
        2\hat z_1\hat z_2          & \hat z_1^2 -\hat z_2^2 \\
    \end{bmatrix}.
\end{equation*}
\end{lem}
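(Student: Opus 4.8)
The plan is to treat the two dimensions separately, in each case reducing the matrix-valued integral to scalar integrals that are evaluated through the standard integral representations of the (spherical) Bessel functions. The key structural observation is that $\int_{\mathbb{S}^{d-1}}\hat x\otimes\hat x\,\mathrm{e}^{\mathrm{i}\omega\hat x\cdot z}\,\mathrm{d}s(\hat x)$ is a symmetric matrix which, by the rotational covariance of the sphere, can depend on $z$ only through $|z|$ and the direction $\hat z$; hence it is built from the isotropic tensor $I$ and a traceless anisotropic tensor. The isotropic part will be governed by the order-zero Bessel function, and the anisotropic part by the order-two Bessel function.

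For $d=2$ I would parametrize $\hat x=(\cos\theta,\sin\theta)$ and write $\hat x\cdot z=|z|\cos(\theta-\phi)$, where $\hat z=(\cos\phi,\sin\phi)$. Using the double-angle identities I decompose
\begin{equation*}
\hat x\otimes\hat x=\tfrac12 I+\tfrac12\begin{bmatrix}\cos 2\theta & \sin 2\theta\\ \sin 2\theta & -\cos 2\theta\end{bmatrix},
\end{equation*}
so that, after the shift $\psi=\theta-\phi$, every entry reduces to one of the integrals $\int_0^{2\pi}\mathrm{e}^{\mathrm{i}t\cos\psi}\cos(n\psi)\,\mathrm{d}\psi=2\pi\mathrm{i}^n J_n(t)$ with $t=\omega|z|$, for $n=0$ and $n=2$, while the $\sin(2\psi)$ contributions vanish by the even symmetry of $\cos\psi$. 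The $n=0$ term produces $\pi J_0(\omega|z|)I$, and the $n=2$ term (carrying the factor $\mathrm{i}^2=-1$) assembles into the order-two angular factor $-\pi J_2(\omega|z|)A(\hat z)$ appearing in the statement; adding the two gives the asserted two-dimensional formula.

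For $d=3$ I would exploit the covariance directly and write the integral as $f(t)I+g(t)\,\hat z\otimes\hat z$ with $t=\omega|z|$, reducing the problem to the two scalars $f,g$. Taking the trace gives $3f+g=\int_{\mathbb{S}^2}\mathrm{e}^{\mathrm{i}t\hat x\cdot\hat z}\,\mathrm{d}s(\hat x)=4\pi j_0(t)$, and contracting with $\hat z$ on both sides gives $f+g=2\pi\int_{-1}^{1}\mu^2\mathrm{e}^{\mathrm{i}t\mu}\,\mathrm{d}\mu$. The remaining scalar integral I would evaluate by differentiating $\int_{-1}^1\mathrm{e}^{\mathrm{i}t\mu}\,\mathrm{d}\mu=2j_0(t)$ twice in $t$, which expresses it through $j_0$ and $j_2$. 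Solving the resulting $2\times2$ linear system for $f$ and $g$ and regrouping into the combinations $I$ and $I-3\hat z\otimes\hat z$ yields the stated three-dimensional identity. Equivalently, one may invoke the Funk--Hecke formula, or the Gegenbauer expansion of the plane wave, to obtain $f$ and $g$ in a single step.

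The routine part is the evaluation of these classical scalar integrals; the step requiring the most care is the anisotropic, order-two contribution. In two dimensions this means correctly assembling the $\cos 2\theta,\sin 2\theta$ terms into the matrix $A(\hat z)$ and tracking the sign $\mathrm{i}^2=-1$, and in three dimensions it means isolating $g(t)$ and verifying that the intermediate $j_1$-type pieces arising from the double differentiation regroup into $j_0$ and $j_2$. The isotropic parts, by contrast, follow immediately from $J_0$ and $j_0$. No compatibility issue arises at the excluded point $z=0$, since both sides in fact extend continuously there.
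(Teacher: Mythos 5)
Your proposal is correct, and it splits naturally into a part that coincides with the paper and a part that genuinely does not. In two dimensions you and the paper follow the same route: parametrize $\hat x$ by the angle $\theta$, split $\hat x\otimes\hat x$ into $\tfrac12 I$ plus a traceless $\cos2\theta/\sin2\theta$ matrix, and reduce to the integrals $\int_0^{2\pi}\mathrm{e}^{\mathrm{i}t\cos\psi}\cos(n\psi)\,\mathrm{d}\psi=2\pi\mathrm{i}^nJ_n(t)$ for $n=0,2$ (the paper imports exactly these identities from the Jacobi--Anger expansion and the cited formulas of Zhang et al.). In three dimensions, however, the paper proceeds by expanding the plane wave in Legendre polynomials and invoking a quoted lemma giving $\int_{\mathbb{S}^2}\hat x\otimes\hat x\,P_n(\hat x\cdot\hat z)\,\mathrm{d}s(\hat x)$ explicitly for all $n$, whereas you use rotational covariance to posit the ansatz $f(t)I+g(t)\,\hat z\otimes\hat z$ and determine $f,g$ from the trace and the $\hat z$-contraction, evaluating $\int_{-1}^1\mu^2\mathrm{e}^{\mathrm{i}t\mu}\,\mathrm{d}\mu$ by differentiating $2j_0(t)$ twice and using the recurrence $j_0+j_2=\tfrac{3}{t}j_1$. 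Your route is more self-contained (no external moment lemma is needed and only two scalar integrals are computed), while the paper's route via the Legendre moments is the one that scales if higher-order tensor moments of the plane wave were ever required. Carrying out your linear system gives $f=\tfrac{4\pi}{3}(j_0+j_2)$ and $g=-4\pi j_2$, which regroups exactly into $\tfrac{4\pi}{3}\bigl(j_0I+j_2(I-3\hat z\otimes\hat z)\bigr)$, so the 3D argument closes as claimed.

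One caveat on the two-dimensional ``assembling'' step, which you state but do not write out: the computation actually produces the anisotropic factor
\begin{equation*}
-\pi J_2(\omega|z|)\begin{bmatrix}\cos2\phi & \sin2\phi\\ \sin2\phi & -\cos2\phi\end{bmatrix}
=-\pi J_2(\omega|z|)\begin{bmatrix}\hat z_1^2-\hat z_2^2 & 2\hat z_1\hat z_2\\ 2\hat z_1\hat z_2 & \hat z_2^2-\hat z_1^2\end{bmatrix},
\end{equation*}
whose diagonal is the \emph{negative} of the diagonal of $A(\hat z)$ as printed in the statement (the off-diagonal entries agree). A Taylor check of the $(1,1)$ entry at $\phi=0$ confirms $\pi\bigl(J_0(t)-J_2(t)\bigr)=\pi\bigl(1-\tfrac{3t^2}{8}\bigr)+\mathcal{O}(t^4)$, consistent with the direct expansion of $\int_0^{2\pi}\cos^2\theta\,\mathrm{e}^{\mathrm{i}t\cos\theta}\,\mathrm{d}\theta$, so the displayed matrix above is the correct one and the definition of $A(\hat z)$ in the statement carries a sign typo on its diagonal. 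This is a defect of the statement (shared by the paper's own proof) rather than of your method, but you should write the final matrix explicitly rather than assert that it matches $A(\hat z)$ by fiat.
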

\begin{proof}
For the two-dimensional case, let $\hat x=(\hat x_1, \hat x_2)=(\cos\theta, \sin\theta)^{\top}$ and $ z=|z|(\cos\phi, \sin\phi)^{\top}$. Using the Jacobi-Anger expansion in 2D,
\begin{equation*}
   \mathrm{e}^{\mathrm{i}\omega \hat x\cdot z}=J_0(\omega|z|)+2\sum_{n=1}^{\infty} \mathrm{i}^n J_n(\omega|z|)\cos(n\theta-n\phi),
\end{equation*}
and \cite[(2.9b), (2.9c) and (2.10)]{Zhang2019}, one can easily derive that
\begin{align*}
   \int_{\mathbb{S}^1}  \hat x \otimes \hat x\, \mathrm{e}^{\mathrm{i}\omega\hat x\cdot z}\, \mathrm{d}s(\hat x)
   & =\int_{\mathbb{S}^1}
   \begin{bmatrix}
     \hat x_1 \hat x_1 & \hat x_1 \hat x_2 \\
     \hat x_2 \hat x_1 & \hat x_2 \hat x_2
   \end{bmatrix}
   \mathrm{e}^{\mathrm{i}\omega\hat x\cdot z}\, \mathrm{d}s(\hat x)\\
   &=\pi\left(J_0(\omega |z|)I - J_2(\omega|z|)
   \begin{bmatrix}
     \cos 2\phi & \sin 2\phi \\
     \sin 2\phi & -\cos 2\phi
   \end{bmatrix} \right) \\
   & =\pi\left(J_0(\omega |z|) I- J_2(\omega|z|) A(\hat z) \right).
\end{align*}

Next, we discuss the three-dimensional case.  By \cite[Lemma 3.5]{Sara2018}, we find that
 \begin{equation*}
   \int_{\mathbb{S}^{2}}  \hat x \otimes \hat x\, P_n(\hat x\cdot \hat z)\, \mathrm{d}s(\hat x)=
    \begin{cases}
    \dfrac{4\pi}{3} I,\quad & n=0,\medskip \\
    \dfrac{4\pi}{5}\left( \hat z \otimes \hat z-\dfrac{1}{3} I\right),\quad & n=2,\medskip\\
   0, \quad & n\in \mathbb{N}\backslash\{0,2\},
   \end{cases}
 \end{equation*}
 where $P_n$ are the Legendre polynomials. Using the 3D Jacobi-Anger expansion
 \begin{equation*}
   \mathrm{e}^{\mathrm{i}\omega \hat x\cdot z}=
   \sum_{n=0}^{\infty} \mathrm{i}^n (2n+1) j_n(\omega|z|) P_n(\hat x \cdot \hat z ),
 \end{equation*}
 and a straightforward calculation, we have
 \begin{align*}
   \int_{\mathbb{S}^2}  \hat x \otimes \hat x\, \mathrm{e}^{\mathrm{i}\omega\hat x\cdot z}\, \mathrm{d}s(\hat x)
   & =\int_{\mathbb{S}^2}  \hat x \otimes \hat x \left(
   \sum_{n=0}^{\infty} \mathrm{i}^n (2n+1) j_n(\omega|z|) P_n(\hat x \cdot \hat z ) \right) \mathrm{d}s(\hat x) \\
   & =\frac{4\pi}{3}\Big(  j_0(\omega|z|)I+j_2(\omega|z|)\big(I-3\hat z \otimes \hat z\big)\Big).
 \end{align*}
\end{proof}

We are now ready to present the properties of the multi-frequency indicator function \eqref{eq:NM},  which plays an important role in reconstructing the locations and moment tensors of point sources. In the following, we denote by $B(x_0, R)$ the ball centered at $x_0$ with radius $R$, i.e., $B(x_0, R)=\{x\in \mathbb{R}^d:|x-x_0|<R\}$.
\begin{thm}\label{thm:main}
 Let the elastic source $S$ be of the form \eqref{eq:Source} with $M^{(j)} \neq 0$ and the
 indicator function be described in \eqref{eq:NM} with $\mathbb{W}_N$ defined in Definition \ref{def:frequencies}. We have the following asymptotic behavior of the indicator function
\begin{equation}\label{eq:00}
  \mathcal{I}^{N, d}(s_j)= M^{(j)} +\mathcal{O}\left(\frac{1}{N}\right),\quad N\to\infty,\quad j=1,2,\cdots m.
\end{equation}
Moreover, it holds that
\begin{align}
 \label{eq:01} & \left|\mathcal{I}_{\ell,\hbar}^{N, d}(z)\right| \leq |M_{\ell,\hbar}^{(j)} |+ \mathcal{O}\left(\frac{1}{N}\right),\quad \forall z\in B(s_j,1/\omega^*),\quad \ell,\hbar=1,\cdots,d,\\
  \label{eq:02} & \left|\mathcal{I}_{\ell,\hbar}^{N, d}(z)\right|=  \mathcal{O}\left(\frac{1}{N}\right), \quad \quad  \forall z\in \Omega\backslash \bigcup\limits_{j=1}^m B(s_j, 1/\omega^*),\quad \ell,\hbar=1,\cdots,d,
\end{align}
where $\mathcal{I}_{\ell,\hbar}^{N, d}$ and $M_{\ell,\hbar}^{(j)}$ refer to the element in row $\ell$ and column $\hbar$ of matrix $\mathcal{I}^{N, d}$ and $M^{(j)}$, respectively. In particular, the equality in \eqref{eq:01} holds if and only if $z=s_j$.
\end{thm}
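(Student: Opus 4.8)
The plan is to reduce the boundary integral $\mathcal{R}$ to a volume integral over the source via Betti's reciprocity, evaluate it in closed form, and then analyse the resulting Bessel-function sums. First I would observe that at the rescaled frequency $c_\alpha\omega$ each column of $V_\alpha(\hat x,\cdot,c_\alpha\omega)$ is a homogeneous Navier solution: the exponent collapses to $\mathrm{e}^{-\mathrm{i}\omega\hat x\cdot y}$ in both cases, $V_p$ being a longitudinal plane wave and $V_s$ a transversal one, so that $(c_\alpha\omega)^2V_\alpha+\mathcal{L}_{\lambda,\mu}V_\alpha=0$. Applying Betti's second identity on $\Omega$ column-by-column, together with $\omega^2u+\mathcal{L}_{\lambda,\mu}u=-S$, then yields $\mathcal{R}(\hat x,\omega,\alpha)=\int_\Omega V_\alpha^\top(\hat x,y,c_\alpha\omega)\,S(y)\,\mathrm{d}y$. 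The decisive simplification is the identity $\sum_{\alpha\in\{p,s\}}V_\alpha=I\,\mathrm{e}^{-\mathrm{i}\omega\hat x\cdot y}$; after an integration by parts against $\nabla\delta(\cdot-s_j)$ this collapses to the clean expression $\sum_{\alpha\in\{p,s\}}\mathcal{R}(\hat x,\omega,\alpha)=\mathrm{i}\omega\sum_{j=1}^m M^{(j)}\hat x\,\mathrm{e}^{-\mathrm{i}\omega\hat x\cdot s_j}$.

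Next I would substitute this into \eqref{eq:NM}. Using $(M^{(j)}\hat x)\otimes\hat x=M^{(j)}(\hat x\otimes\hat x)$ and Lemma \ref{lemma1} with $z$ replaced by $z-s_j$, the angular integral becomes a Bessel matrix, and the prefactor $d/(2^{d-1}\pi\mathrm{i})$ is tailored to cancel exactly the constant of Lemma \ref{lemma1} together with the factor $\mathrm{i}\omega/\omega$. This reduces the indicator to
\[
\mathcal{I}^{N,d}(z)=\frac{1}{N}\sum_{\omega\in\mathbb{W}_N}\sum_{j=1}^m M^{(j)}K^{(j)}_{\omega}(z),
\]
where, in $2$D, $K^{(j)}_\omega(z)=J_0(\omega|z-s_j|)I-J_2(\omega|z-s_j|)A(\widehat{z-s_j})$, and analogously with spherical Bessel functions in $3$D. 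To prove \eqref{eq:00} I would evaluate at $z=s_{j_0}$: the $j=j_0$ summand has vanishing argument, and since $J_0(0)=j_0(0)=1$ while $J_2(0)=j_2(0)=0$ (so the $A$-term, whose direction $\widehat{z-s_j}$ is then immaterial, drops out), each frequency contributes exactly $M^{(j_0)}$, whose $N$-average is $M^{(j_0)}$. For $j\neq j_0$ the arguments satisfy $\omega_n\,\mathrm{dist}(s_{j_0},s_j)\ge\omega^*\,\mathrm{dist}(s_{j_0},s_j)\ge 2$ by \eqref{eq:omega0}; invoking the large-argument decay $|J_0(t)|,|J_2(t)|\lesssim t^{-1/2}$ (resp. $|j_0(t)|,|j_2(t)|\lesssim t^{-1}$) and the geometric law $\omega_n=\eta^{n-1}\omega^*$, the average $\tfrac1N\sum_n$ is dominated by a convergent geometric series in $\eta^{-1/2}$, hence is $\mathcal{O}(1/N)$.

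The same mechanism, with distances bounded below by half the pairwise distance, shows that on $B(s_j,1/\omega^*)$ every off-diagonal source term ($j'\neq j$) is $\mathcal{O}(1/N)$, reducing \eqref{eq:01} and \eqref{eq:02} to the single diagonal matrix $K^{(j)}_\omega$, i.e. to the scalar averages $\tfrac1N\sum_n J_0(\omega_n r)$ and $\tfrac1N\sum_n J_2(\omega_n r)$ with $r=|z-s_j|$. For \eqref{eq:01} I would show the $J_2$-contribution is $\mathcal{O}(1/N)$ \emph{uniformly} over the whole ball by splitting the sum at $\omega_n r=1$: the small-argument terms are controlled by $|J_2(t)|\lesssim t^2$ and the large-argument terms by $|J_2(t)|\lesssim t^{-1/2}$, each summing to a constant via a geometric series no matter how small $r$ is (and identically for $j_2$ in $3$D). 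The surviving $J_0$-part equals $M^{(j)}_{\ell,\hbar}\,\tfrac1N\sum_n J_0(\omega_n r)$, and the elementary bound $|J_0(t)|\le1$ (resp. $|j_0(t)|\le1$) gives $|\mathcal{I}^{N,d}_{\ell,\hbar}(z)|\le|M^{(j)}_{\ell,\hbar}|+\mathcal{O}(1/N)$. When $r\ge1/\omega^*$, as in \eqref{eq:02}, the argument is bounded below, so even the $J_0$-average decays and the whole quantity is $\mathcal{O}(1/N)$. The equality characterization then follows from sharpness of $|J_0|\le1$: leading-order equality forces $|\tfrac1N\sum_n J_0(\omega_n r)|=1$, and since $J_0(t)=1$ only at $t=0$ while $J_0>-1$ throughout, this requires $\omega_n r=0$ for every $n$, i.e. $r=0$ and $z=s_j$. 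I expect the main obstacle to be precisely this uniform $\mathcal{O}(1/N)$ control of the $J_2/j_2$ averages at points arbitrarily close to $s_j$, where the Bessel argument sweeps from near $0$ through the large-argument regime and the naive decay estimate degenerates; the two-regime split keyed to the geometric frequencies is what rescues it.
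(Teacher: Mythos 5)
Your proposal is correct and follows essentially the same route as the paper: Betti's identity reduces $\mathcal{R}$ to the volume integral against $S$, yielding $\sum_{\alpha}\mathcal{R}=\mathrm{i}\omega\sum_j M^{(j)}\hat x\,\mathrm{e}^{-\mathrm{i}\omega\hat x\cdot s_j}$, then Lemma \ref{lemma1} converts the angular integral to Bessel averages, which are controlled by exactly the same two-regime split (small-argument $t^2$ bounds versus large-argument decay, keyed to the geometric frequencies) that the paper uses. The only cosmetic differences are that you treat the 2D case explicitly with the $t^{-1/2}$ decay (the paper does only 3D "without loss of generality") and you bound the $j_0$-average by $1$ rather than the paper's sharper $\widetilde N/N$, which still suffices for \eqref{eq:01}.
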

\begin{proof}
Without loss of generality, we only consider the 3D case. In terms of
\begin{equation*}
   \int_{\mathbb{R}^3}\, g(x)\nabla\delta (x-a)\, \mathrm{d}x= - \nabla g(a),
\end{equation*}
and a straightforward calculation, we have
\begin{align*}
  \int_{\Omega} V_{\alpha}(\hat x,y, \omega)  S(y)\, \mathrm{d}y
  & = \int_{\Omega}\ V_{\alpha}(\hat x,y, \omega) \left(\sum_{j=1}^m M^{(j)} \nabla\delta(y-s_j)\right) \, \mathrm{d}y \\
 & = \sum_{j=1}^m \frac{\mathrm{i}\omega}{c_{\alpha}}\, V_{\alpha}(\hat x,s_j, \omega) (M^{(j)} \hat x),\quad \forall \omega\in\mathbb{W}_N,\quad \alpha=p,s.
\end{align*}

By Betti's integral theorem \cite{Kupradze1965}, multiplying $V_{\alpha}$ to equation  \eqref{eq:NavierEquation} and integrating over $\Omega$, we obtain
 \begin{equation*}
\int_{\Gamma} \Big(T_{\nu}V_{\alpha}(\hat x,y,\omega) u(y,\omega)- V_{\alpha}(\hat x,y, \omega)T_{\nu} u(y,\omega) \Big)\mathrm{d}s(y)
=\int_{\Omega} V_{\alpha}(\hat x, y,\omega) S(y) \, \mathrm{d}y,\ \forall \omega\in\mathbb{W}_N.
\end{equation*}
Combining the last two equations, it derives that
\begin{equation*}
  \sum_{\alpha\in\{p,s\}} \mathcal{R}(\hat x, \omega, \alpha)
=\sum_{j=1}^{m}\mathrm{i}\omega M^{(j)} \hat x \mathrm{e}^{-\mathrm{i}\omega\hat x\cdot s_j},\quad\forall \omega\in\mathbb{W}_N.
\end{equation*}
Therefore, it follows from Lemma \ref{lemma1} that
\begin{align*}
\mathcal{I}^{N,3}(z)
& = \frac{3}{ 4\pi \mathrm{i} N}\sum_{n=1}^N \left( \frac{1}{\omega_n} \int_{\mathbb{S}^{2}}
   \left(\sum_{\alpha\in\{p,s\}} \mathcal{R}(\hat x, \omega, \alpha) \right)
   \otimes \hat x \mathrm{e}^{\mathrm{i}\omega_n\hat x\cdot z}\,\mathrm{d}s(\hat x)\right),\\
& = \frac{3}{4\pi \mathrm{i}N}\sum_{n=1}^N  \int_{\mathbb{S}^{2}}
  \left( \sum_{j=1}^{m} \mathrm{i}\left( M^{(j)} \hat x \right)\otimes \hat x\mathrm{e}^{\mathrm{i}\omega_n\hat x\cdot (z-s_j)}\right) \,\mathrm{d}s(\hat x)\\
& = \frac{1}{N} \sum_{n=1}^N \sum_{j=1}^{m}\left(M^{(j)}  j_0(\omega_n|z-s_j|)
+M^{(j)} \left(I-3\,\hat{t}_j\otimes \hat{t}_j\right)j_2(\omega_n|z-s_j|)\right),
\end{align*}
where $\hat{t}_j=(z-s_j)/{|z-s_j|}$.

Furthermore, the last equation can be rewritten as
\begin{equation}\label{eq:multiJ}
\begin{aligned}
 \mathcal{I}^{N,3}(z)&=\sum_{j=1}^m  \left(M^{(j)}  \frac{1}{N}\sum_{n=1}^N j_0(\omega_n|z-s_j|)\right)\\
   &\quad + \sum_{j=1}^m \left(M^{(j)}  \left(I-3\,\hat t_j\otimes \hat t_j \right)   \frac{1}{N}\sum_{n=1}^N j_2(\omega_n|z-s_j|)\right).
   \end{aligned}
\end{equation}
Using the fact that
\begin{equation*}
  j_0(t)=\frac{\sin t}{t},\quad
  j_2(t)=\left(\frac{3}{t^2}-1\right)\frac{\sin t}{t}-\frac{3\cos t}{t^2}, \quad t>0,
\end{equation*}
and the Taylor expansion of $\sin t$ and $\cos t$, one has
\begin{align}
  \label{eq:expansionj0j2} & j_0(t)< 1-\frac{t^2}{3!}+\frac{t^4}{5!},\quad
  j_2(t)<\frac{23t^2-t^4}{5!}, \quad 0< t<1,\\
 \label{eq:formj0j2} & |j_0(t)|\leq \frac{1}{t},\quad
  |j_2(t)|<\frac{2}{t}, \quad t\geq 1.
  \end{align}
On the one hand, for $z\in \Omega\backslash \bigcup\limits_{j=1}^m B(s_j, 1/\omega^*)$, using \eqref{eq:formj0j2}, one can find that
\begin{equation}\label{eq:temp1}
\begin{aligned}
\frac{1}{N}\sum_{n=1}^N j_0(\eta^{n-1} \omega^* |z-s_{j}|)
     &\leq \frac{1}{N}\sum_{n=1}^{N}\frac{1}{\eta^{n-1}}
     =\mathcal{O}\left(\frac{1}{N}\right),\\
\frac{1}{N}\sum_{n=1}^{N} j_2(\eta^{n-1} \omega^* |z-s_{j}|)
   &\leq \frac{1}{N}\sum_{n=1}^{N}\frac{2}{\eta^{n-1}}
   =\mathcal{O}\left(\frac{1}{N}\right).
  \end{aligned}
\end{equation}
Substituting the $\eqref{eq:temp1}$ into \eqref{eq:multiJ}, we have
\begin{equation*}
  |\mathcal{I}_{\ell,\hbar}^{N,3}(z)|=\mathcal{O}\left(\frac{1}{N}\right), \quad \forall
  z\in \Omega\backslash \bigcup\limits_{j=1}^m B(s_j, 1/\omega^*), \quad \ell,\hbar=1,\cdots,d,
\end{equation*}
and this justifies \eqref{eq:02}.

On the other hand, for $z\in B(s_j, 1/\omega^*), \,j=1,2,\cdots, m$, in terms of \eqref{eq:omega0} and \eqref{eq:formj0j2},  equation \eqref{eq:multiJ} can be rewritten as
\begin{align}
 \mathcal{I}^{N,3}(z)
   = & M^{(j)}\frac{1}{N}\sum_{n=1}^N j_0(\omega_n|z-s_j|)\notag \\
   & +  M^{(j)}  \left(I-3\,\widehat{z-s_j}\otimes \widehat{z-s_j}\right) \frac{1}{N} \sum_{n=1}^N j_2(\omega_n|z-s_j|)+\mathcal{O}\left(\frac{1}{N}\right). \label{eq:multiJ1}
   \end{align}

Since $j_0(0)=1$ and $j_2(0)=0$, equation \eqref{eq:multiJ1} immediately yields
\begin{equation*}
  \mathcal{I}^{N,3}(s_j)=M^{(j)} +\mathcal{O}\left(\frac{1}{N}\right),
\end{equation*}
and thus \eqref{eq:00} holds. If $0<|z-s_j|\leq1/(\eta^{N-1}\omega^*)$, then by \eqref{eq:expansionj0j2}, we have
\begin{align}
  & \frac{1}{N}\sum_{n=1}^{N} j_0(\eta^{n-1} \omega^* |z-s_j|)
        <\!\frac{1}{N}\sum_{n=1}^{N}\left(1+\frac{\eta^{4(n-N)} }{5!}\right)=\!1+\mathcal{O}\left(\frac{1}{N}\right), \label{eq:j0_estimate1}\\
  & \frac{1}{N}\sum_{n=1}^{N} j_2(\eta^{n-1} \omega^* |z-s_j|)
        <\frac{1}{N}\sum_{n=1}^{N}\frac{23\eta^{2(n-N)}}{5!}=\mathcal{O}\left(\frac{1}{N}\right). \label{eq:j2_estimate1}
\end{align}
Finally, if $1/(\eta^{\widetilde{N}}\omega^*)< |z-s_j|\leq 1/(\eta^{\widetilde{N}-1}\omega^*),\, 1\leq\widetilde{N}\leq N-1$, then \eqref{eq:expansionj0j2} and \eqref{eq:formj0j2} imply that
\begin{align}
 \frac{1}{N}\sum_{n=1}^N j_0(\eta^{n-1} \omega^* |z-s_j|)
< & \frac{1}{N}\left(\sum_{n=1}^{\widetilde{N}}\left(1
+\frac{\eta^{4(n-\widetilde{N})}}{5!}\right)+\sum_{n=\widetilde{N}+1}^{N}
\frac{1}{\eta^{n-\widetilde{N}-1}}\right)\notag \\
= &\frac{\widetilde{N}}{N}+\mathcal{O}\left(\frac{1}{N}\right), \label{eq:j0_estimate2}
\end{align}
\begin{equation}
	\frac{1}{N}\sum_{n=1}^{N} j_2(\eta^{n-1} \omega^* |z-s_j|)
	< \frac{1}{N}\left(\sum_{n=1}^{\widetilde{N}}\frac{23\eta^{2(n-\widetilde{N})} }{5!}+\sum_{n=\widetilde{N}+1}^{N}\frac{2}{\eta^{n-\widetilde{N}-1}}\right)
	= \mathcal{O}\left(\frac{1}{N}\right). \label{eq:j2_estimate2}
\end{equation}
Hence, combining \eqref{eq:multiJ1} and \eqref{eq:j0_estimate1}-\eqref{eq:j2_estimate2}, we obtain
\begin{equation*}
   |\mathcal{I}_{\ell,\hbar}^{N,3} (z)| \leq |M_{\ell,\hbar}^{(j)} |+\mathcal{O}\left(\frac{1}{N}\right),\quad \forall z\in B(s_j, 1/\omega^*),
\end{equation*}
where the equality holds if and only if $z=s_j$.
\end{proof}

Roughly  speaking, for a sufficiently large $N$, Theorem \ref{thm:main} asserts that the indicator function almost achieves its maximum at the same location as the true source. This property is essential for the success of our reconstruction method.

%==============================================================

\section{Uniqueness and stability}\label{sec:stability}

In this section, we give a uniqueness result for the inverse source problem and investigate the stability issue of the direct sampling method.
\begin{thm}
Let the source $S$ be of the form \eqref{eq:Source} and $\mathbb{W}_N$ be the admissible set of angular frequencies. Then $S$ can be uniquely determined by  the corresponding multi-frequency measurements $\left\{\left(u(\cdot, c_{\alpha}\omega), \,T_{\nu}u(\cdot, c_{\alpha}\omega)\right)\big|_{\Gamma}\right\}$, $\omega\in \mathbb{W}_N$, $\alpha=p,s$, when $N \rightarrow +\infty$.
\end{thm}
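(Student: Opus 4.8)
The plan is to leverage Theorem \ref{thm:main}, exploiting the fact that the multi-frequency indicator function \eqref{eq:NM} is assembled exclusively from the boundary Cauchy data $(u(\cdot,c_\alpha\omega),\,T_\nu u(\cdot,c_\alpha\omega))|_\Gamma$. First I would set up the comparison: suppose two sources $S=\sum_{j=1}^m M^{(j)}\nabla\delta(\cdot-s_j)$ and $\tilde S=\sum_{k=1}^{\tilde m}\tilde M^{(k)}\nabla\delta(\cdot-\tilde s_k)$, both of the form \eqref{eq:Source}, produce identical measurements for every $\omega\in\mathbb{W}_N$ and both polarizations $\alpha=p,s$, as $N\to\infty$. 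Because $\mathcal{I}^{N,d}$ depends only on these measurements, the two sources generate one and the same indicator function for each $N$.

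Next I would record that the closed-form representation \eqref{eq:multiJ} (together with its two-dimensional analogue obtained from the $J_0,J_2$ part of Lemma \ref{lemma1}) is valid pointwise at every sampling point $z$ for any source of the form \eqref{eq:Source}; crucially, its derivation uses only Betti's formula and Lemma \ref{lemma1}, not the admissibility condition of Definition \ref{def:frequencies}. The heart of the argument is then to pass to the pointwise limit $N\to\infty$ in \eqref{eq:multiJ}. Since $j_0(0)=1$ and $j_2(0)=0$, evaluating at a genuine source point $s_j$ gives $\mathcal{I}^{N,d}(s_j)\to M^{(j)}$, the cross-contributions from the remaining centers $s_{j'}$, $j'\neq j$, decaying like $\mathcal{O}(1/N)$ by the Bessel bounds \eqref{eq:formj0j2}. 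Conversely, at any $z$ that is not a source location every distance $|z-s_j|$ is strictly positive, so each averaged Bessel sum is $\mathcal{O}(1/N)$ and $\mathcal{I}^{N,d}(z)\to 0$. Consequently the limiting indicator $\mathcal{I}^{\infty,d}(z):=\lim_{N\to\infty}\mathcal{I}^{N,d}(z)$ is supported precisely on the set of source locations and equals $M^{(j)}$ at $s_j$.

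Finally I would read off the invariants. Because $\mathcal{I}^{\infty,d}$ is a single function fixed by the measurements, its support must coincide whether computed from the $S$-representation or the $\tilde S$-representation, forcing $\{s_j\}_{j=1}^m=\{\tilde s_k\}_{k=1}^{\tilde m}$ (hence $m=\tilde m$); and its value at each common point returns the corresponding moment tensor, forcing $M^{(j)}=\tilde M^{(k)}$ whenever $s_j=\tilde s_k$. Thus $S=\tilde S$, which is the claimed uniqueness. Note that the lower bound \eqref{eq:omega0} on $\omega^*$ is not needed for this pointwise argument, although it guarantees via \eqref{eq:01}--\eqref{eq:02} that the source points are in fact the significant local maximizers of $|\mathcal{I}^{N,d}|$, which is what the numerical scheme exploits.

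The step I expect to be the main obstacle is the rigorous justification of the pointwise decay $\mathcal{I}^{N,d}(z)\to 0$ at every non-source point. This requires splitting each averaged Bessel sum $\tfrac1N\sum_{n=1}^N j_0(\eta^{n-1}\omega^*|z-s_j|)$ (and its $j_2$ counterpart) into a low-frequency block of finitely many terms, whose count is fixed once $z$ is fixed and which are each bounded and annihilated by the $1/N$ prefactor, and a high-frequency tail that decays geometrically by \eqref{eq:formj0j2}. One must also verify that the cross-terms between distinct centers vanish in the limit without invoking the separation hypothesis, and carry out the $J_0,J_2$ version of these estimates in parallel for the planar case $d=2$.
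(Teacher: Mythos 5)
Your argument is correct, but it is packaged differently from the paper's. The paper exploits linearity of the data with respect to $S$: it reduces at once to the homogeneous problem $u|_\Gamma = T_\nu u|_\Gamma = 0$ for all $\omega\in\mathbb{W}_N$, observes that then $\mathcal{R}(\hat x,\omega,\alpha)\equiv 0$ and hence $\mathcal{I}^{N,d}\equiv 0$ on $\Omega$, and invokes \eqref{eq:00} at each source point to get $M^{(j)}=\mathcal{O}(1/N)$, whence $M^{(j)}=0$ as $N\to\infty$ — about four lines in total. You instead compare two candidate sources directly, pass to the pointwise limit $\mathcal{I}^{\infty,d}$, and identify the locations as the support of the limit and the tensors as its values there. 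Both routes rest on the same engine, namely the representation \eqref{eq:multiJ} and the Bessel decay \eqref{eq:formj0j2}; yours is longer but buys two things. First, it recovers the locations and the tensors explicitly from the limiting indicator rather than only showing that the null space of the data map is trivial, which makes the uniqueness statement constructive. Second, your explicit remark that the separation condition \eqref{eq:omega0} is not needed for the pointwise limit quietly repairs a gap that the paper leaves implicit: after the linearity reduction one must apply \eqref{eq:00} to the \emph{difference} source $S-\tilde S$, whose point set may be denser than that of $S$ alone and so need not satisfy \eqref{eq:omega0} for the given $\omega^*$; your observation that the cross-terms still average to $\mathcal{O}(1/N)$ for any \emph{fixed} configuration (with a configuration-dependent constant, via the split into a bounded low-frequency block and a geometrically decaying tail) is exactly what makes either version of the proof airtight. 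The technical step you flag as the main obstacle is indeed the right one, and your proposed treatment of it is sound.
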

\begin{proof}
Without loss of generality, we only need to consider the homogeneous boundary value problem. Suppose that
\begin{equation*}
  u(\cdot,c_p\omega)|_{\Gamma}= T_{\nu}u(\cdot,c_p \omega)|_{\Gamma}= u(\cdot, c_s\omega)|_{\Gamma}= T_{\nu}u(\cdot, c_s\omega)|_{\Gamma}=0,\quad \forall \omega\in\mathbb{W}_N,
\end{equation*}
then we have
\begin{equation*}
  \mathcal{R}(\hat x, \omega, p)=\mathcal{R}(\hat x, \omega, s)=0, \quad \forall \omega\in\mathbb{W}_N,
\end{equation*}
which, together with \eqref{eq:NM}, yields
\begin{equation*}
  \mathcal{I}^{N,d}(z)=0, \quad z\in \Omega.
\end{equation*}
Combining the last equation and \eqref{eq:00}, we can get
\begin{equation*}
   M^{(j)} =\mathcal{O}\left(\frac{1}{N}\right), \quad j=1,2,\cdots, m.
\end{equation*}
Furthermore, let $N \to +\infty$, we have $M^{(j)} =0$ for $j=1,2,\cdots, m$, and it completes the proof of this theorem.
\end{proof}

Now let us discuss the stability.  Assume that the measured noisy  data  satisfies
\begin{equation}\label{eq:noise}
  \begin{aligned}
 \|u^{\epsilon}(\cdot,c_\alpha\omega)-u(\cdot,c_\alpha\omega)\|_{L^2(\Gamma)}\leq & \epsilon\|u(\cdot,c_\alpha\omega)\|_{L^2(\Gamma)}, \quad \forall \omega\in\mathbb{W}_N,\\
 \|T_{\nu}u^{\epsilon}(\cdot,c_\alpha\omega)-T_{\nu}u(\cdot,c_\alpha\omega)\|_{L^2(\Gamma)}\leq  & \epsilon\|T_{\nu}u(\cdot,c_\alpha\omega)\|_{L^2(\Gamma)}, \quad \forall \omega\in\mathbb{W}_N,\\
  \end{aligned}
\end{equation}
where $\alpha=p,s$, and $\epsilon>0$ denotes the noise level. Introduce the perturbed indicator function by
\begin{equation}\label{eq:NM-noise}
	\mathcal{I}^{N, d, \epsilon}(z)=\frac{d}{ 2^{d-1}\pi \mathrm{i} N}\sum_{\omega\in \mathbb{W}_ N}\left(\frac{1}{\omega} \int_{\mathbb{S}^{d-1}}
\left(\sum_{\alpha\in\{p,s\}} \mathcal{R}^\epsilon(\hat x, \omega,\alpha) \right)
\otimes \hat x \mathrm{e}^{\mathrm{i}\omega\hat x\cdot z}\,\mathrm{d}s(\hat x)\right).
\end{equation}
Here, the function $\mathcal{R}^{\epsilon}$ is given by
\begin{equation}\label{eq:R-epsilon}
 \mathcal{R}^{\epsilon}(\hat x, \omega, \alpha)=\int_{\Gamma}
 [T_{\nu}V_{\alpha}(\hat x, y, c_{\alpha}\omega)]^{\top} u^{\epsilon}(y,c_{\alpha}\omega)
 -V_{\alpha}^{\top}(\hat x, y, c_{\alpha}\omega) T_{\nu} u^{\epsilon}(y,c_{\alpha}\omega)\,
 \mathrm{d}s(y).
\end{equation}

\begin{thm}
Let the elastic source $S$ be of the form \eqref{eq:Source} with $M^{(j)} \neq 0$ and the indicator functions  $\mathcal{I}^{N, d, \epsilon}$ be describes in \eqref{eq:NM-noise}. For a sufficiently large $N$, we have  the following asymptotic expansion
\begin{equation*}
  \mathcal{I}^{N, d, \epsilon}(s_j)= M^{(j)} +\mathcal{O}\left(\frac{1}{N}\right)+\mathcal{O}(\epsilon),\quad j=1,2,\cdots m.
\end{equation*}
Furthermore, we have
\begin{align}
&\left|\mathcal{I}_{\ell,\hbar}^{N, d, \epsilon}(z)\right|\leq|M_{\ell,\hbar}^{(j)} | + \mathcal{O}\left(\frac{1}{N}\right)+\mathcal{O}(\epsilon), \ \forall z\in B(s_j,1/\omega^*),\quad \ell,\hbar=1,\cdots,d, \label{eq:01noise}\\
&\left|\mathcal{I}_{\ell,\hbar}^{N, d, \epsilon}(z)\right|=  \mathcal{O}\left(\frac{1}{N}\right)+\mathcal{O}(\epsilon), \quad \forall z\in \Omega\backslash \bigcup\limits_{j=1}^{m} B(s_j, 1/\omega^*), \quad \ell,\hbar=1,\cdots,d,\notag
\end{align}
where the equality \eqref{eq:01noise} holds only at $z=s_j$.

\end{thm}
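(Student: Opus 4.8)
The plan is to treat $\mathcal{I}^{N,d,\epsilon}$ as a deterministic perturbation of the noise-free indicator function $\mathcal{I}^{N,d}$ and to invoke Theorem \ref{thm:main} for the unperturbed part. Since both $\mathcal{R}$ and $\mathcal{R}^{\epsilon}$ (see \eqref{eq:R-epsilon}) are linear in the measured data, subtracting the two definitions gives
\begin{equation*}
\mathcal{R}^{\epsilon}(\hat x,\omega,\alpha)-\mathcal{R}(\hat x,\omega,\alpha)
=\int_{\Gamma}[T_{\nu}V_{\alpha}]^{\top}\big(u^{\epsilon}-u\big)
-V_{\alpha}^{\top}\,T_{\nu}\big(u^{\epsilon}-u\big)\,\mathrm{d}s(y),
\end{equation*}
all fields being evaluated at frequency $c_{\alpha}\omega$. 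Inserting this difference into \eqref{eq:NM} shows that $\mathcal{I}^{N,d,\epsilon}(z)-\mathcal{I}^{N,d}(z)$ is governed entirely by $\mathcal{R}^{\epsilon}-\mathcal{R}$, so the whole argument reduces to bounding this quantity uniformly in the sampling point.

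First I would estimate $\mathcal{R}^{\epsilon}-\mathcal{R}$ pointwise in $\hat x$ by Cauchy--Schwarz together with the relative noise bounds \eqref{eq:noise}, obtaining
\begin{equation*}
\big|\mathcal{R}^{\epsilon}(\hat x,\omega,\alpha)-\mathcal{R}(\hat x,\omega,\alpha)\big|
\leq \epsilon\Big(\|T_{\nu}V_{\alpha}\|_{L^2(\Gamma)}\,\|u\|_{L^2(\Gamma)}
+\|V_{\alpha}\|_{L^2(\Gamma)}\,\|T_{\nu}u\|_{L^2(\Gamma)}\Big).
\end{equation*}
For each fixed $\omega\in\mathbb{W}_N$ the test field $V_{\alpha}$ and its traction $T_{\nu}V_{\alpha}$ are continuous on the compact set $\mathbb{S}^{d-1}\times\Gamma$, so the right-hand side is bounded by $C_{\omega}\epsilon$ with $C_{\omega}$ independent of $\hat x$. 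Next I would propagate this estimate through the indicator function: since $|\hat x|=1$ and $|\mathrm{e}^{\mathrm{i}\omega\hat x\cdot z}|=1$, the angular integral over $\mathbb{S}^{d-1}$ only contributes the bounded factor $|\mathbb{S}^{d-1}|$, and averaging the $N$ frequencies against the prefactor $1/(\omega N)$ yields
\begin{equation*}
\big|\mathcal{I}^{N,d,\epsilon}(z)-\mathcal{I}^{N,d}(z)\big|\leq C_N\,\epsilon,\qquad\forall z\in\overline\Omega,
\end{equation*}
uniformly in $z$. Writing $\mathcal{I}^{N,d,\epsilon}=\mathcal{I}^{N,d}+(\mathcal{I}^{N,d,\epsilon}-\mathcal{I}^{N,d})$ and applying Theorem \ref{thm:main} to the first summand, the triangle inequality delivers all three assertions with an additive $\mathcal{O}(\epsilon)$: the expansion at $z=s_j$ from \eqref{eq:00}, the upper bound on $B(s_j,1/\omega^*)$ from \eqref{eq:01}, and the decay away from the sources from \eqref{eq:02}. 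The clause ``equality only at $z=s_j$'' is inherited directly from the strict inequality in Theorem \ref{thm:main}.

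The main obstacle is the dependence of the constant $C_N$ on $N$. The traction $T_{\nu}V_{\alpha}$ grows like $\omega$, and the data norms $\|u\|_{L^2(\Gamma)},\,\|T_{\nu}u\|_{L^2(\Gamma)}$ grow polynomially in $\omega$ as well, so the single $1/\omega$ weight does not fully neutralize this growth across the geometrically spaced frequencies of $\mathbb{W}_N$. Consequently the two error terms $\mathcal{O}(1/N)$ and $\mathcal{O}(\epsilon)$ cannot both be driven to zero by letting $N\to\infty$ alone; the honest reading is to fix $N$ large enough to control $\mathcal{O}(1/N)$ and then regard $\mathcal{O}(\epsilon)$ as the residual noise error for that $N$. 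Upgrading this to a genuinely $N$-uniform estimate would require exploiting the cancellation between the two boundary-integral terms in $\mathcal{R}$ — which collapses the exact data to the $\mathcal{O}(\omega)$ quantity $\sum_{j}\mathrm{i}\omega M^{(j)}\hat x\,\mathrm{e}^{-\mathrm{i}\omega\hat x\cdot s_j}$ appearing in the proof of Theorem \ref{thm:main} — rather than bounding the two terms separately, and this is the delicate point I would expect to consume most of the effort.
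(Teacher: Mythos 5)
Your proposal follows essentially the same route as the paper: the paper's proof consists of observing that \eqref{eq:noise} and \eqref{eq:R-epsilon} give $|\mathcal{R}^{\epsilon}(\hat x,\omega,\alpha)-\mathcal{R}(\hat x,\omega,\alpha)|=\mathcal{O}(\epsilon)$ and then repeating the argument of Theorem \ref{thm:main}, which is exactly your decomposition $\mathcal{I}^{N,d,\epsilon}=\mathcal{I}^{N,d}+(\mathcal{I}^{N,d,\epsilon}-\mathcal{I}^{N,d})$ with the perturbation controlled by Cauchy--Schwarz. The subtlety you flag in your last paragraph --- that the constant in the $\mathcal{O}(\epsilon)$ term depends on $N$ because $T_{\nu}V_{\alpha}$ and the data norms grow with $\omega$ across the geometrically spaced frequencies, and that no Betti-type cancellation is available for the noise component $u^{\epsilon}-u$ (which does not solve the Navier equation) --- is a genuine point that the paper's own one-line estimate silently ignores, so your ``fix $N$ first, then read $\mathcal{O}(\epsilon)$ as the residual error for that $N$'' interpretation is the correct reading of the theorem as stated.
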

\begin{proof}
From \eqref{eq:noise} and the definition of $\mathcal{R}^{\epsilon}$ in \eqref{eq:R-epsilon}, it can be readily seen that
\begin{equation*}
  |\mathcal{R}^{\epsilon}(\hat x, \omega, \alpha)-\mathcal{R}(\hat x, \omega,\alpha)|=
  \mathcal{O}(\epsilon), \quad \alpha=p,s.
\end{equation*}
Thus, the proof is completed by a similar argument as the proof of Theorem \ref{thm:main}.
\end{proof}

Finally, we present the reconstruction scheme in the following {\bf Algorithm}.
\begin{table}[htp]
\centering
\begin{tabular}{cp{.8\textwidth}}
\toprule
\multicolumn{2}{l}{{\bf Algorithm:}\quad Reconstruction of elastic sources with multi-frequency data} \\
\midrule
 {\bf Step 1} & Given the number of the frequencies $N$ and a fixed $\eta>1$, choose  the angular frequency set $\mathbb{W}_N$ in Definition \ref{def:frequencies} and collect the noisy near-field data in \eqref{eq:near-data}. \\
{\bf Step 2} & Select a sampling mesh  $\mathcal{T}_h$  in $\Omega$. For each sampling point $z\in \mathcal{T}_h$, evaluate the perturbed indicator function $\mathcal{I}^{N,d,\epsilon}(z)$ in \eqref{eq:NM-noise}.\\
 {\bf Step 3} & According to the values of $\sum_{\ell,\hbar=1}^d|\mathcal{I}_{\ell,\hbar}^{N,d,\epsilon}(z)|^2$, collect the significant local maximizers $\{\widetilde{z}_{j}\}_{j=1}^m$, then $\{\widetilde{z}_{j}\}_{j=1}^m$ is treated as the locations of the point sources. \\
{\bf Step 4} & By substituting $\{\widetilde{z}_{j}\}_{j=1}^m$ into the indicator function $\mathcal{I}^{N,d,\epsilon}(\widetilde{z}_j)$ in \eqref{eq:NM-noise},  we obtain the reconstructed moment tensors as
$M^{(j)} \approx \mathcal{I}^{N,d,\epsilon}(\widetilde{z}_j)$. \\
\bottomrule
\end{tabular}
\end{table}

%==============================================================

\section{Numerical experiments}\label{sec:example}

In this section, we present several two and three dimensional numerical examples to illustrate the effectiveness and robustness of the proposed method.

In all the numerical examples, we consider the domain $\Omega=[-6, 6]^d$ and Lam\'{e} constants $\lambda=1$ and $\mu=1$. The synthetic Cauchy datasets $(u,T_{\nu} u)$ are generated by solving the forward problem of \eqref{eq:main}-\eqref{eq:KSS} via direct integration.  For the two-dimensional case, the measurement curve $\Gamma$ is chosen as a circle centered at the origin with radius $R=10$, and $200$ measurement points are uniformly distributed on $\Gamma$. For the three-dimensional  case, we choose 500 pseudo-uniformly distributed measurement directions on the sphere $\Gamma$ with radius $R=10$. Figure \ref{fig:Geometry_2D} shows the two-dimensional geometrical setting of the problem, where the locations of  true sources are marked by the small red points,  the measurement curve is plotted as the blue circle and the sampling domain is marked by the dotted square.

\begin{figure}
    \centering \includegraphics[width=0.4\textwidth]{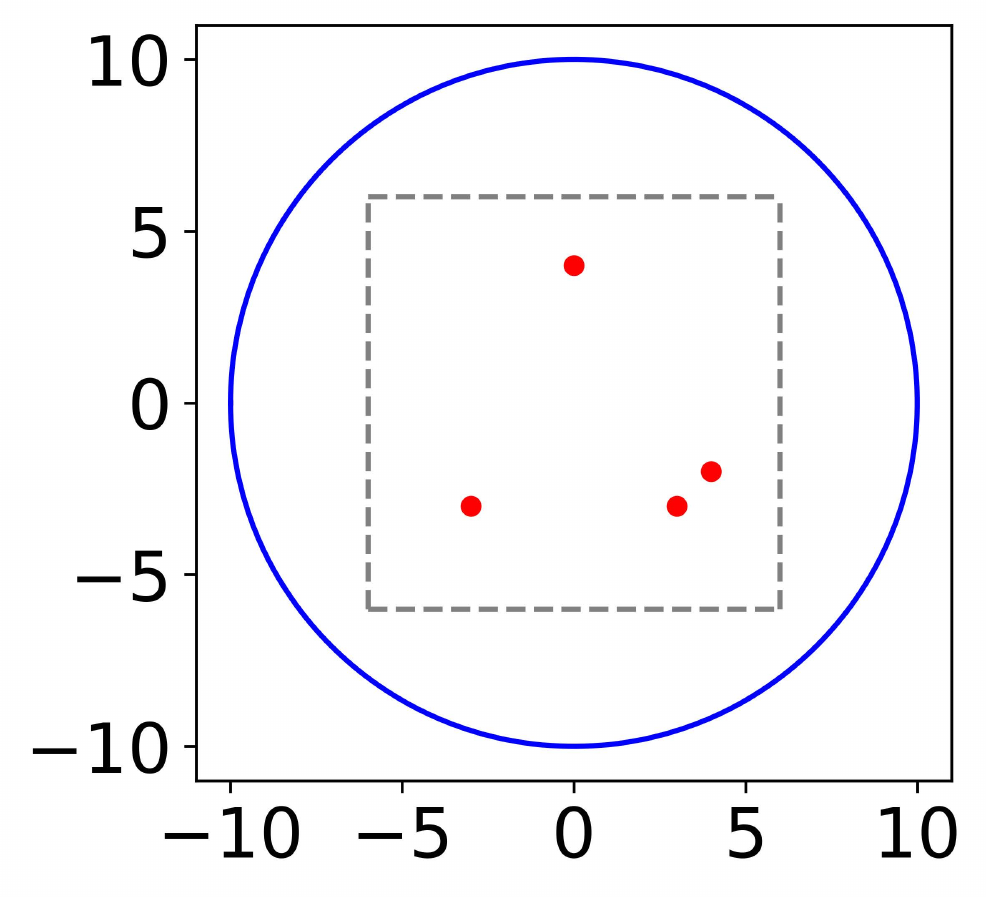}\\
    \caption{Geometrical setting of the two-dimensional problem.}\label{fig:Geometry_2D}
\end{figure}

Next, we present the details of synthetic data. Let $\eta=1.2$ and $\omega^*=5$, unless otherwise specified, the number of frequencies is taken as $N=10$, then the angular frequency sets are given by
\begin{equation*}
\mathbb{W}_{10}= \left\{\omega_n:\ \omega_n=5\times1.2^{n-1},\ n= 1, 2, \cdots 10 \right\}.
\end{equation*}
With the aforementioned admissible angular frequencies, the near field data can be written as
\begin{equation*}
  \left\{(\ u(x_i,\omega_n\sqrt{\lambda+2\mu})
   ,\ T_{\nu}u(x_i,\omega_n\sqrt{\lambda+2\mu} ), u(x_i,\omega_n\sqrt{\mu})
   ,\ T_{\nu}u(x_i,\omega_n\sqrt{\mu})\ )\right\}, \\
\end{equation*}
where $x_i\in\Gamma, \omega_n\in \mathbb{W}_{10}, i=1,2,\cdots,200$, and $n=1,2,\cdots, 10$.

To test the stability of the proposed approach, some random perturbations are added to the synthetic data. Let $u=(u_1, \cdots, u_d)^{\top}$ and $T_{\nu}u=(T_{\nu}u_1, \cdots, T_{\nu}u_d)^{\top}$, then the noisy data were given by
\begin{align*}
 u_{\tau}^{\epsilon}= & u_{\tau}+\epsilon r_{1,{\tau}}|u_{\tau}|\mathrm{e}^{\mathrm{i}\pi r_{2,{\tau}}},\\
 T_{\nu}u_{\tau}^{\epsilon}= & T_{\nu}u_{\tau}+\epsilon r_{1,{\tau}}|T_{\nu}u_{\tau}|\mathrm{e}^{\mathrm{i}\pi r_{2,{\tau}}},
 \end{align*}
where $\tau=1,\cdots, d$, $r_{1,\tau}$ and $r_{2,\tau}$ are two uniformly distributed random numbers, both ranging from $-1$ to 1, and $\epsilon>0$ represents the noise level.

%%============= Example 2D ===============================

\begin{figure}
    \subfigure[]{\includegraphics[width=0.3\textwidth]{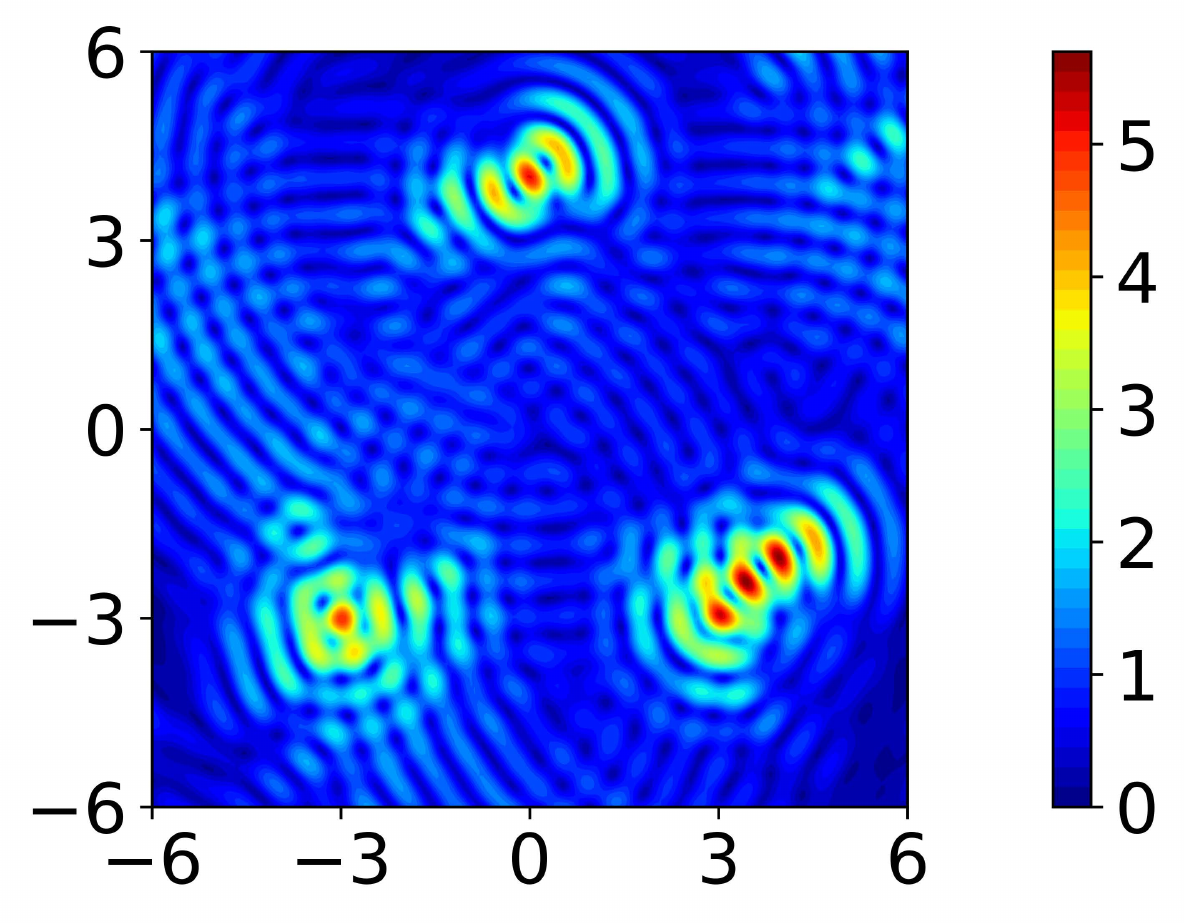}}
    \subfigure[]{\includegraphics[width=0.3\textwidth]{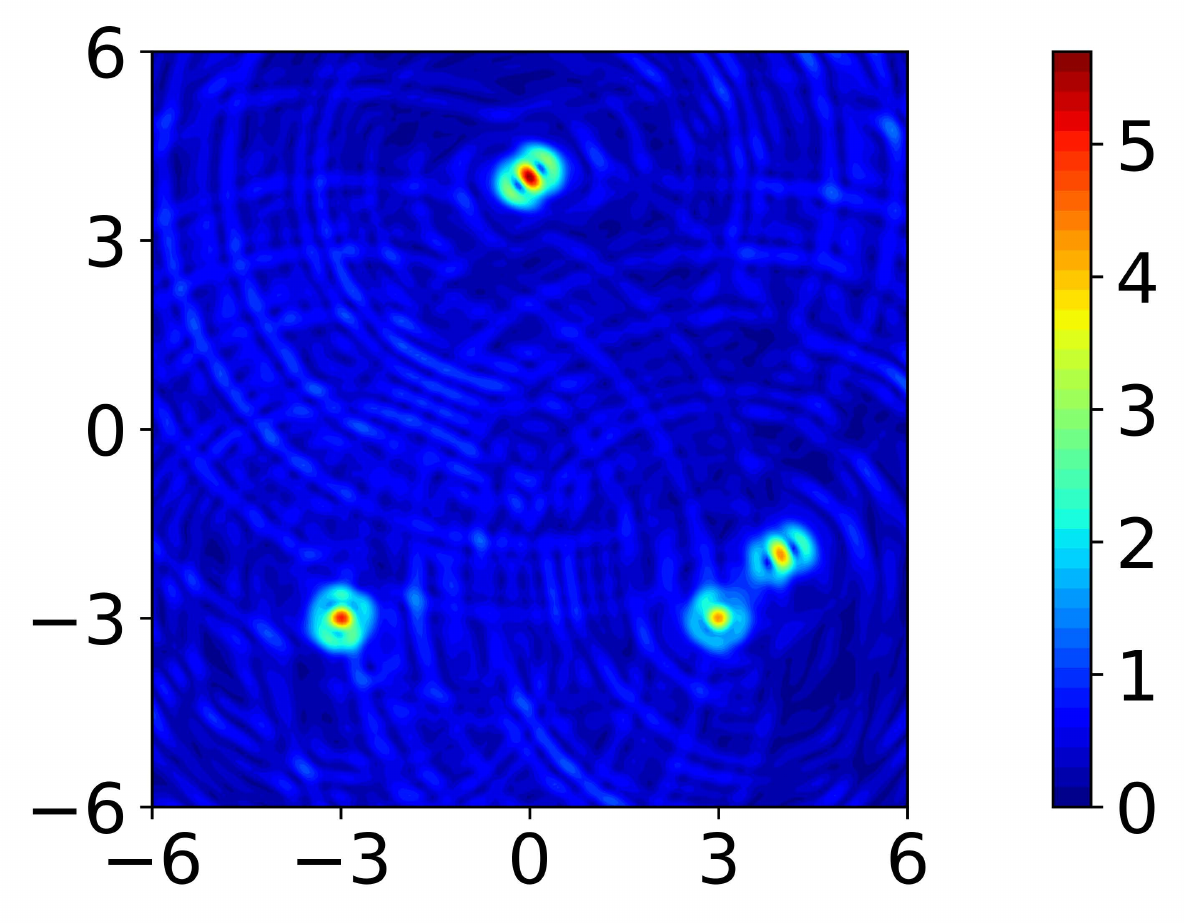}}
    \subfigure[]{\includegraphics[width=0.3\textwidth]{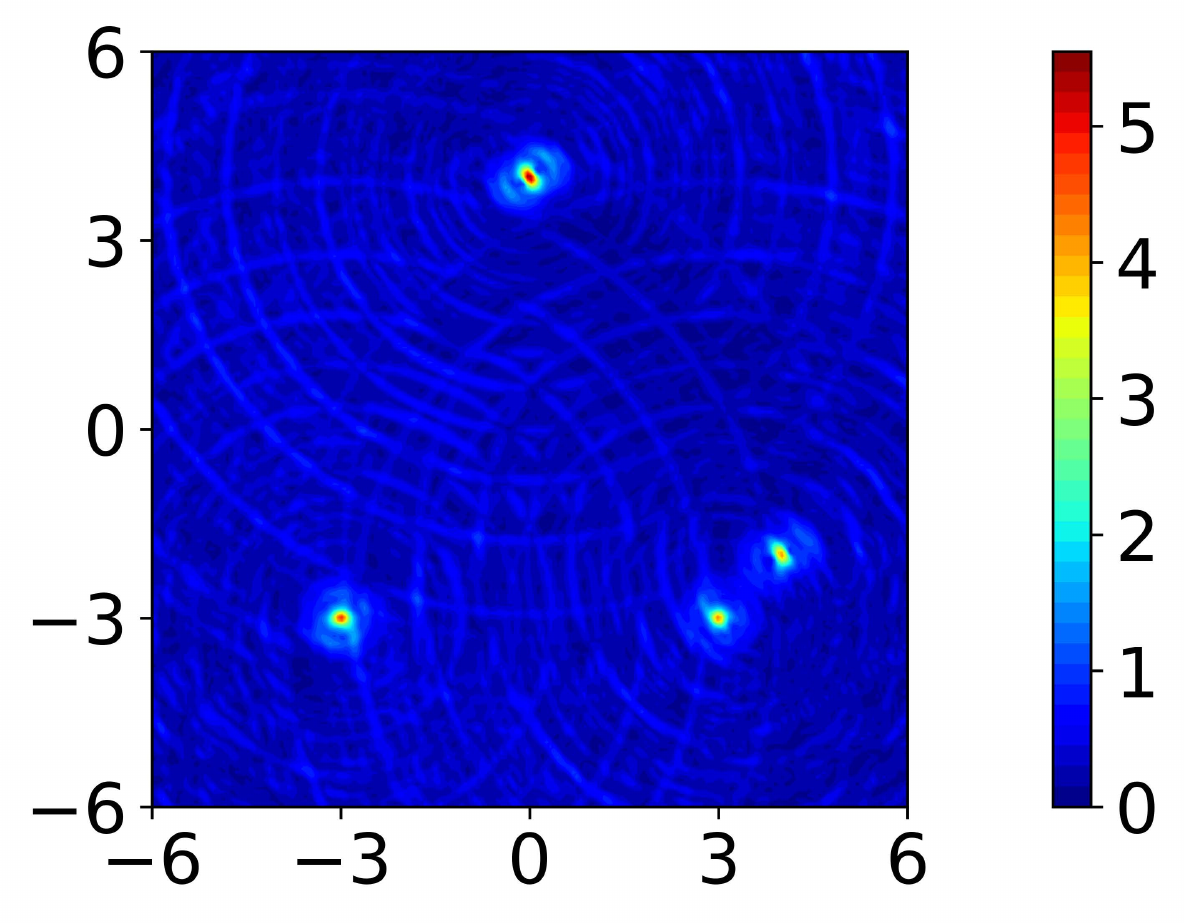}}
    \caption{\label{fig:Location_2D_multi} Contour plots of the multi-frequency indicator function $\sum_{\ell,\hbar=1}^2|\mathcal{I}_{\ell,\hbar}^{N,2,\epsilon}(z)|^2$  with different numbers of frequencies $N$. (a) $N=2$, (b) $N=6$, (c) $N=10$.}
\end{figure}

\begin{figure}
    \subfigure[]{\includegraphics[width=0.3\textwidth]{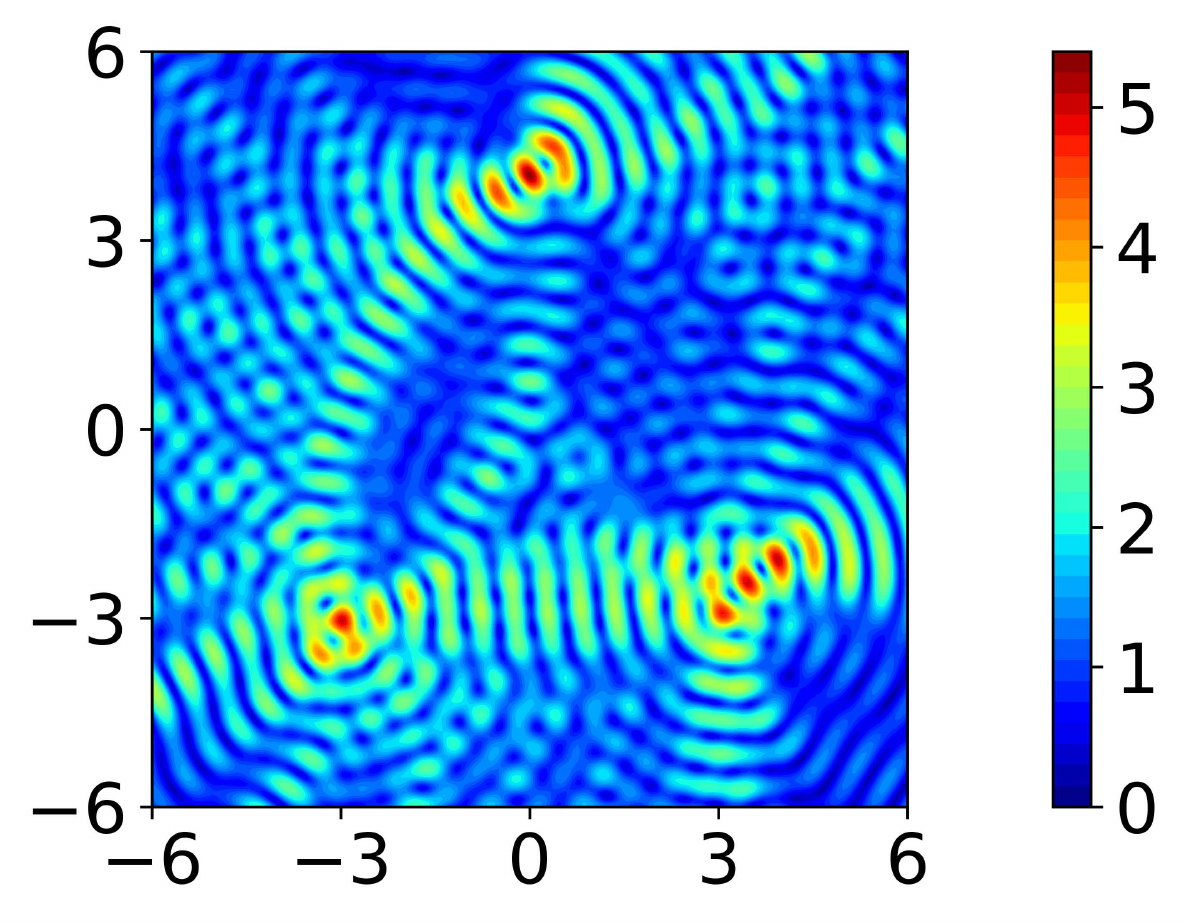}}
    \subfigure[]{\includegraphics[width=0.3\textwidth]{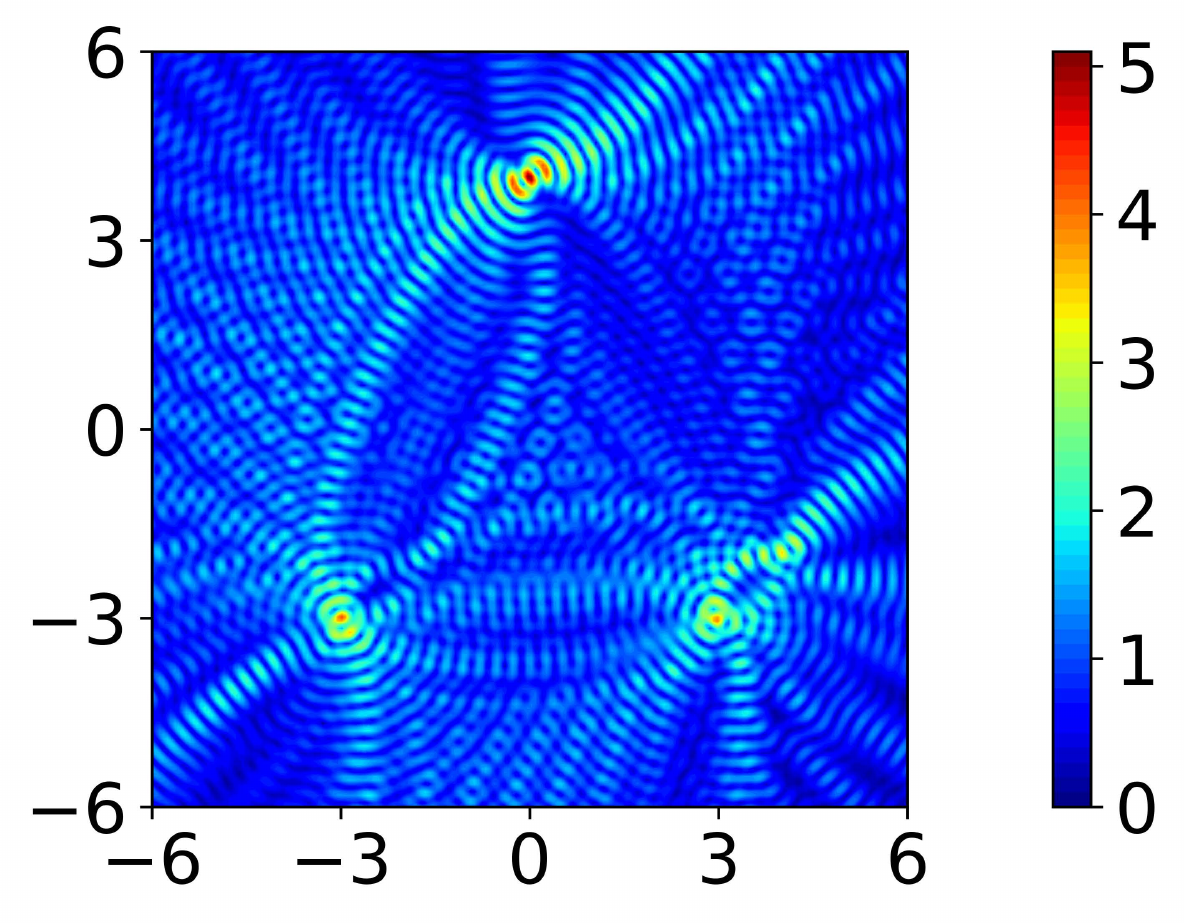}}
    \subfigure[]{\includegraphics[width=0.3\textwidth]{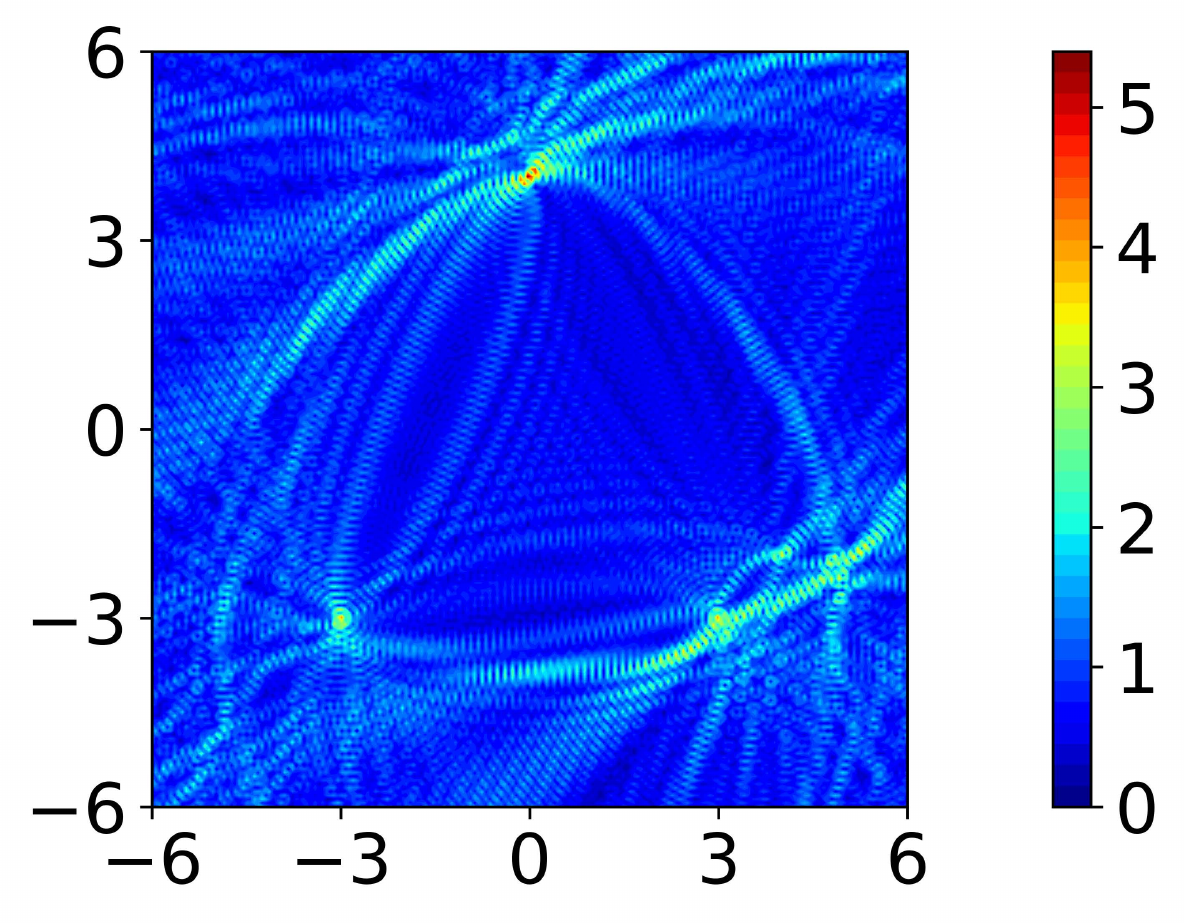}}
    \caption{\label{fig:Location_2D_single} Contour plots of the single-frequency indicator function $\sum_{\ell,\hbar=1}^2|\widehat{\mathcal{I}}_{\ell,\hbar}^{n,2,\epsilon}(z)|^2$  with different frequencies $\omega_n$. (a) $n=2$, (b) $n=6$, (c) $n=10$.}
\end{figure}

\begin{table}
	\centering
	\begin{tabular}{lllll}
		\toprule
		& \multicolumn{2}{c}{\underline{\qquad Exact sources \qquad}}
		& \multicolumn{2}{c}{\underline{\qquad Reconstructed sources \qquad}} \\
	    & \multicolumn{1}{c}{Location} & \multicolumn{1}{c}{Moment tensor}
	    & \multicolumn{1}{c}{Location} & \multicolumn{1}{c}{Moment tensor}\\
		\midrule
    	& $(0, 4)$
        & $ \begin{bmatrix}
                 4 & 2 \\
                 2 & 3 \\
            \end{bmatrix}$
        & $(-0.03,4.01)$
        & $ \begin{bmatrix}
               3.72 & 1.92 \\
                2.05 & 2.82 \\
            \end{bmatrix}$ \medskip \\
	    & $(-3,-3)$
        & $ \begin{bmatrix}
               -3  & 0 \\
                0 & -4 \\
            \end{bmatrix}$
        & $ (-2.98, -2.98)$
        & $ \begin{bmatrix}
                -2.96 & -0.03 \\
               0.09 & -3.82 \\
            \end{bmatrix}$ \medskip \\
		& $(3,-3)$
        & $ \begin{bmatrix}
                0& 3 \\
                 3 & 0 \\
            \end{bmatrix}$
        & $(2.98,-2.98)$
        & $ \begin{bmatrix}
                 0.23  & 2.97 \\
                3.06 & 0.145 \\
            \end{bmatrix}$ \medskip \\
		& $(4,-2)$
        & $ \begin{bmatrix}
                3 & 2 \\
                  2 & 0 \\
            \end{bmatrix}$
        & $( 4.01,  -2.02)$
        & $ \begin{bmatrix}
                3.04  & 2.01  \\
                2.02  & 0.07 \\
            \end{bmatrix}$ \medskip \\
		\bottomrule
	\end{tabular}
	\caption{Reconstruction of four moment tensor point sources from multi-frequency near-field data in 2D.}\label{tab:2D}
\end{table}

\begin{example}
In the first example, we will reconstruct the locations and strengths of four moment tensor point sources with multi-frequency near-field data in the two dimensions. Some parameters of the locations and the strengths  are presented in Table \ref{tab:2D}. Here, we take a uniformly distributed $200\times 200$ sampling mesh $\mathcal{T}_h$ over the  global sampling domain $\Omega=[-6,6]\times[-6,6]$.
To verify the stability of the proposed schemes, $5\%$ noise was added to the artificial multi-frequency near-field data.
\end{example}

Figure \ref{fig:Location_2D_multi} presents  the indicator function $\sum_{\ell,\hbar=1}^2|\mathcal{I}_{\ell,\hbar}^{N,2,\epsilon}(z)|^2$  with different numbers of the frequencies. It can be seen that the indicator function attains a significant local maximum near the exact locations of the point sources (see Figure \ref{fig:Geometry_2D}). By comparing the imaging results among Figure \ref{fig:Location_2D_multi}\,(a)-(c), one can observe that the reconstructed locations are more close to the exact locations as $N$ increases. To exhibit the accuracy quantitatively, we list the parameters (locations and moment tensors) of  the exact and reconstructed sources, respectively, in Table \ref{tab:2D}.

If only a single frequency is utilized, i.e., given a fixed $n$, then the perturbed multi-frequency indicator function \eqref{eq:NM-noise} reduces to the following single-frequency version
\begin{equation*}
   \widehat{ \mathcal{I}}^{n,d,\epsilon}(z)=\frac{d}{ 2^{d-1} \pi\mathrm{i}} \cdot \frac{1}{\omega_n} \int_{\mathbb{S}^{d-1}}
     \left(\sum_{\alpha\in\{p,s\}} \mathcal{R}^{\epsilon}(\hat x, \omega_n,\alpha) \right)
   \otimes \hat x \mathrm{e}^{\mathrm{i}\omega_n\hat x\cdot z}   \,\mathrm{d}s(\hat x), \ n=1,2, \cdots.
\end{equation*}
Figure \ref{fig:Location_2D_single} shows the single-frequency  indicator function $\sum_{\ell,\hbar=1}^2|\widehat{\mathcal{I}}_{\ell,\hbar}^{n,2,\epsilon}(z)|^2$ with different frequencies $\omega_n$.   Comparing  Figure  \ref{fig:Location_2D_multi} and Figure  \ref{fig:Location_2D_single}, it  is shown that the multi-frequency version could yield better reconstructions for determining  locations of the point sources.

%%==============Example 3D================================

\begin{example}
In this example, we aim to recover the locations and strengths of three moment tensor point sources with multi-frequency data in the three dimensions. The relevant parameters of the locations and strengths  are presented in Table \ref{tab:3D}. To illustrate the stability, $5\%$ noise was also added to the synthetic data.
\end{example}

Following the idea of the two-level sampling strategy in \cite{Zhang2019}, we also adopt a coarse-to-fine scheme  in order to decrease the overall computational cost.  To this end, we first use a relatively coarse global sampling grid of $50\times 50\times 50$ to roughly identify the locations. Then we take a uniform local fine sampling grid of $50\times 50\times 50$ with  side-length $2/\omega^*$ for fine tuning. The reconstructions of the locations are shown in Figure \ref{fig:Location_3D}, it is clear that the reconstruction improves as $N$ increases.
To demonstrate the accuracy of the reconstructions, the reconstructed locations and moment tensors are listed in Table \ref{tab:3D}.

\begin{table}[h]
	\centering
\renewcommand\tabcolsep{1.0pt}
    \begin{threeparttable}
	\begin{tabular}{lllll}
		\toprule
		& \multicolumn{2}{c}{\underline{\qquad Exact sources \qquad}}
		& \multicolumn{2}{c}{\underline{\qquad Reconstructed sources \qquad}} \\
    	& \multicolumn{1}{c}{Location} & \multicolumn{1}{c}{Moment tensor}
	    & \multicolumn{1}{c}{Location} & \multicolumn{1}{c}{Moment tensor}\\
		\midrule
		& $(4, 4, 4)$
        & $\begin{bmatrix}
                 9 & -11 & 10 \\
               -10 & -11 &  9 \\
               -10 & 11  & -9 \\
           \end{bmatrix}$
        & $(3.99,3.99,3.99)$
        & $ \begin{bmatrix}
               8.91 & -11.03 & 9.83 \\
              -9.77 & -10.92  & 8.85 \\
             -10.21 & 11.44  & -8.98 \\
            \end{bmatrix}$ \medskip \\
		& $(-4,4,-2)$
        & $ \begin{bmatrix}
                9  & 10 &  11 \\
                10 & -9 & -11 \\
                -9 & 10 &  11 \\
            \end{bmatrix}$
        & $ (-3.99,3.99,-2.00)$
        & $ \begin{bmatrix}
                8.72  & 9.62  & 10.77 \\
                10.21 & -8.96 & -10.74 \\
                -8.70 & 10.34 & 11.17 \\
            \end{bmatrix}$ \medskip \\
		& $(2,-3,-4)$
        & $ \begin{bmatrix}
                -11 & 10 & -9 \\
                9   & 10 & 10 \\
                10  & -11 & 9 \\
            \end{bmatrix}$
        & $(2.00,-3.00,-3.99)$
        & $ \begin{bmatrix}
              -11.11 & 10.39  & -8.81 \\
                8.69 & 9.62   & 9.69 \\
               10.41 & -11.17 & 9.20 \\
            \end{bmatrix}$ \medskip \\
		\bottomrule
	\end{tabular}
\end{threeparttable}
	\caption{Reconstruction of three moment tensor point sources from multi-frequency far-field data in 3D.}\label{tab:3D}
\end{table}

\begin{figure}
    \subfigure[]{\includegraphics[width=0.48\textwidth]{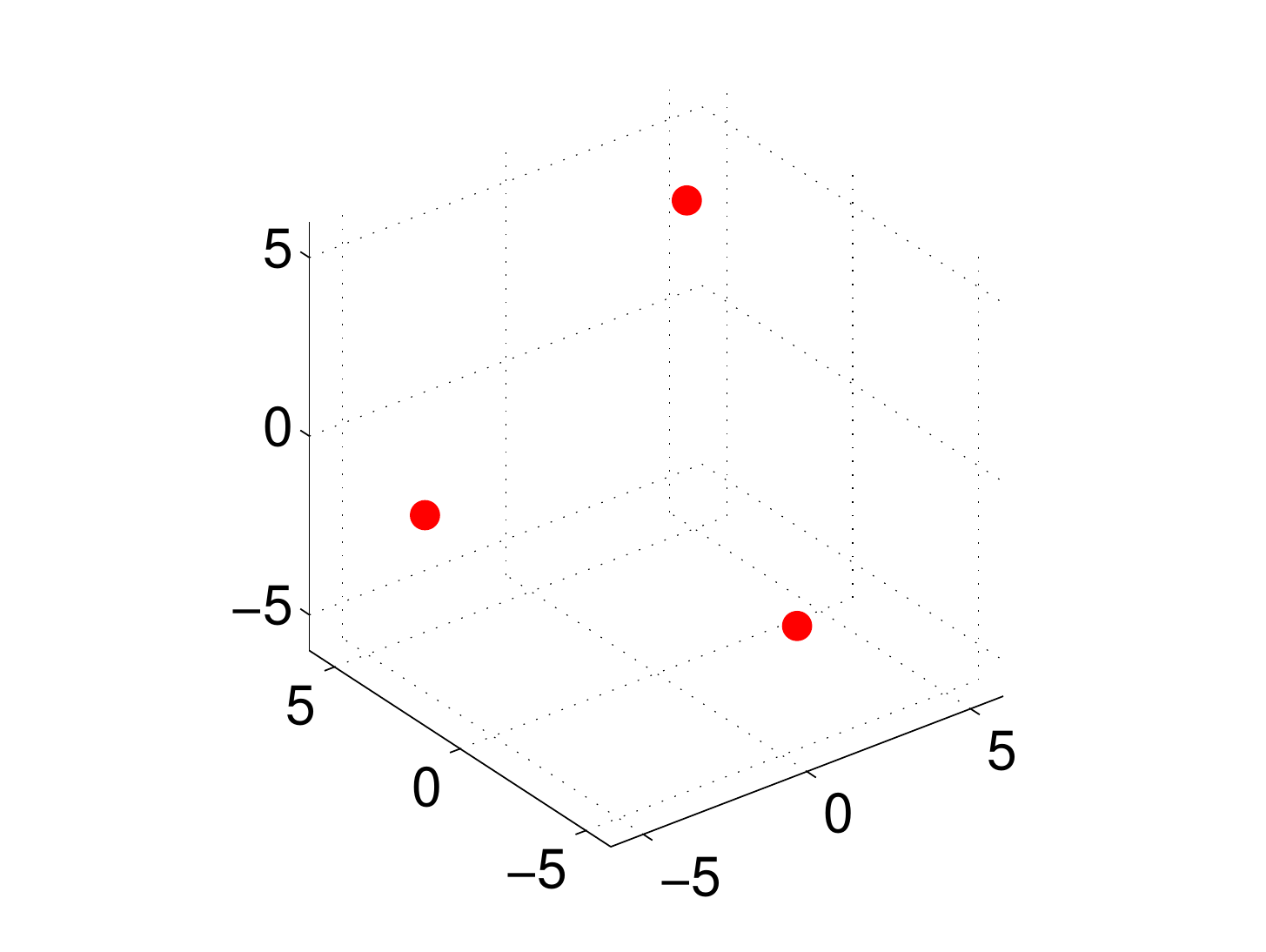}}
    \subfigure[]{\includegraphics[width=0.48\textwidth]{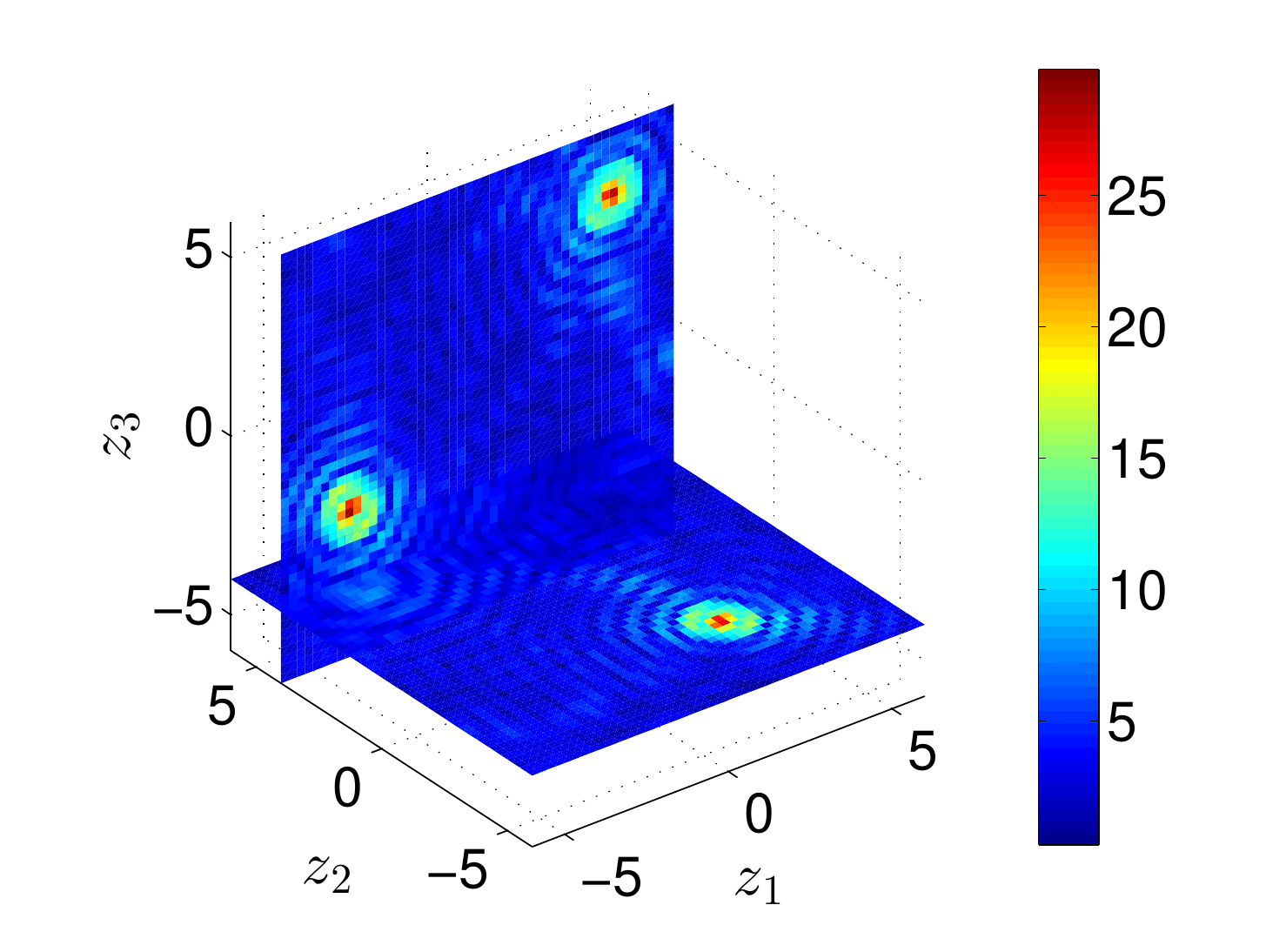}} \\
    \subfigure[]{\includegraphics[width=0.48\textwidth]{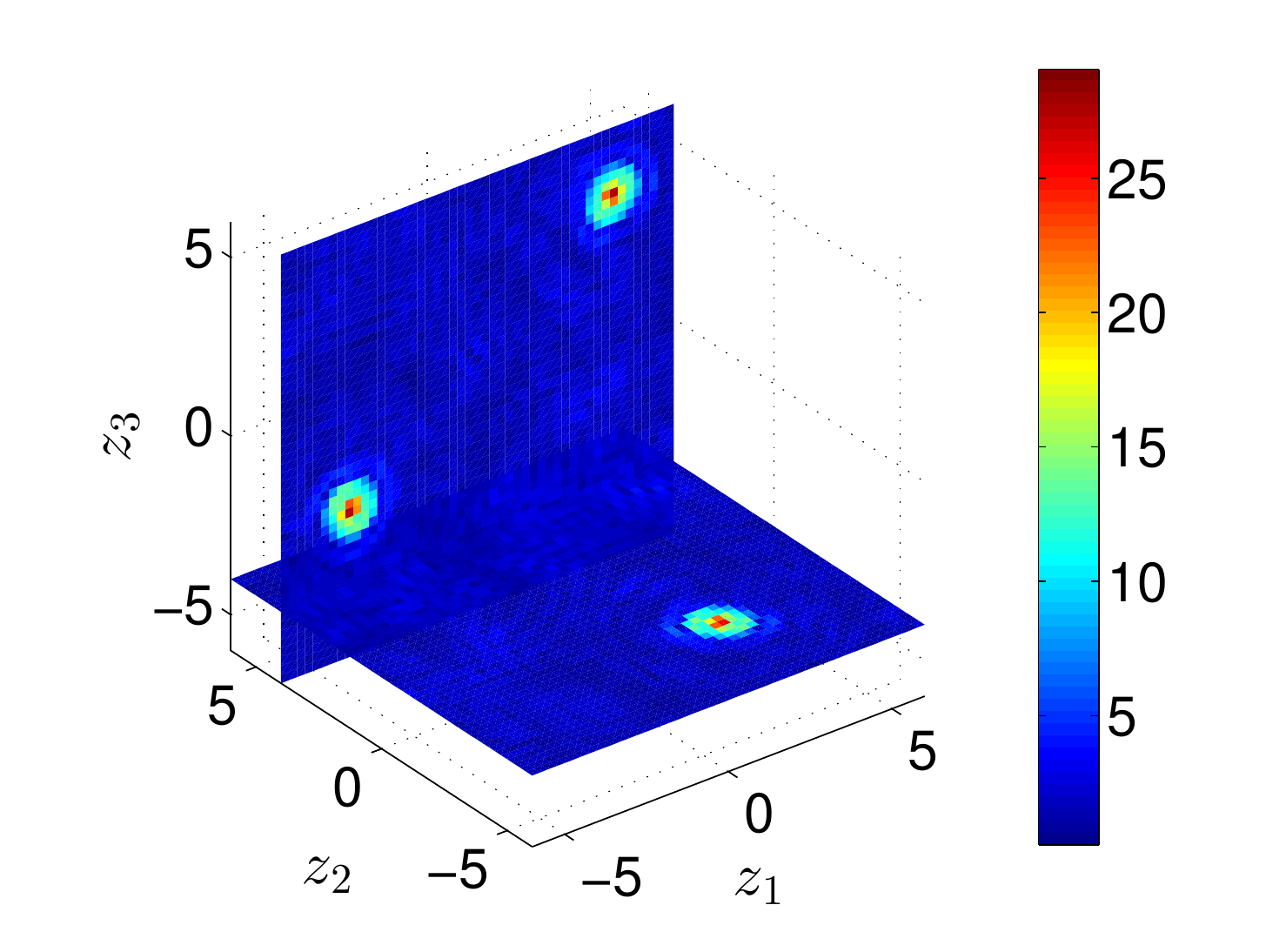}}
    \subfigure[]{\includegraphics[width=0.48\textwidth]{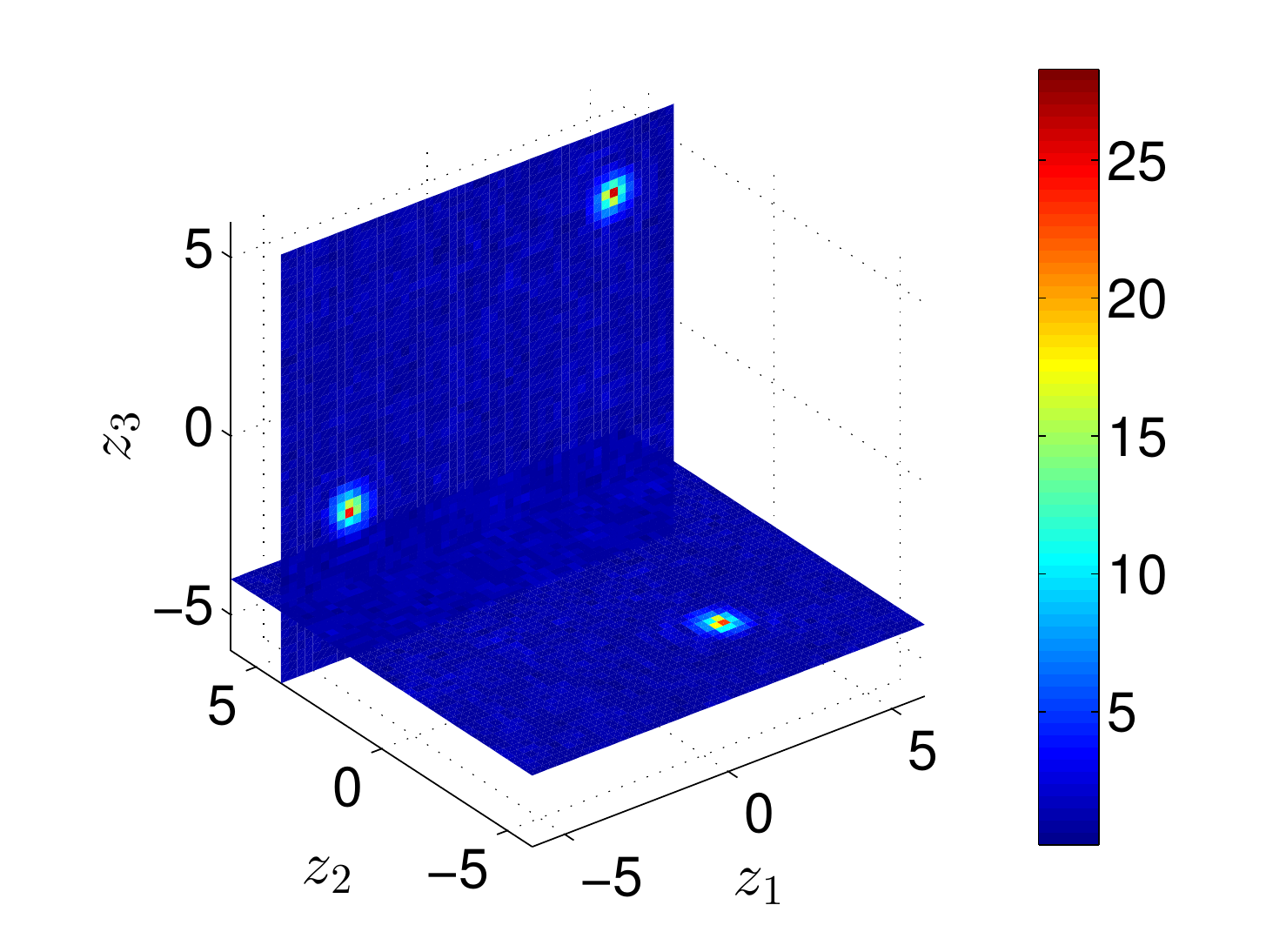}} \\
    \caption{Slice plots of the indicator function   $\sum_{\ell,\hbar=1}^3|\mathcal{I}_{\ell,\hbar}^{N,3,\epsilon}(z)|^2$  with different number of the frequencies $N$. (a) Exact locations marked by the small red points, (b) $N=1$, (c) $N=3$, (d) $N=7$.} \label{fig:Location_3D}
\end{figure}

%=======================================================================================

\section*{Acknowledgment}

The work of Y. Guo  was supported by the NSFC grant under No. 11971133 and the Fundamental Research Funds for the Central Universities. The work of X. Wang was  supported by the Hong Kong Scholars Program grant under No. XJ2019005 and the NSFC grant under No. 12001140.

%The authors would also like to thank the anonymous referees for their insightful and constructive comments and suggestions on this work.

\end{document}